\DeclareMathAlphabet{\mathpzc}{OT1}{pzc}{m}{it}
\newtheoremstyle{note}{11pt}{11pt}{}{}{\bfseries}{.}{.5em}{}
\newtheorem{theo}[equation]{Theorem}
\newtheorem{prop}[equation]{Proposition}
\newtheorem{defin}[equation]{Definition}
\newtheorem{conj}[equation]{Conjecture}
\newtheorem{rem}[equation]{Remark}
\numberwithin{equation}{section}
\newtheorem{lemma}[equation]{Lemma}
\newcommand{\m}[1]{\mathbb{#1}}
\newcommand{\mc}[1]{\mathcal{#1}}
\newcommand{\mr}[1]{\mathrm{#1}}
\newcommand{\Q}{\mathbb{Q}}
\newcommand{\F}{\mathbb{F}}
\newcommand{\T}{\mathbb{T}}
\newcommand{\Z}{\mathbb{Z}}
\newcommand{\R}{\mathbb{R}}
\newcommand{\C}{\mathbb{C}}
\newcommand{\Hbb}{\mathbb{H}}
\newcommand{\Ll}{\mathcal{L}}
\newcommand{\h}{\mathcal{H}}
\newcommand{\W}{\mathcal{W}}
\newcommand{\G}{\Gamma}
\newcommand{\eps}{\varepsilon}
\newcommand{\gl}{\mathrm{GL}}
\newcommand{\Sp}{\mathrm{Sp}}
\newcommand{\GSp}{\mathrm{GSp}}
\newcommand{\boldF}{\mathbf{F}}
\newcommand{\Dfrak}{\mathfrak{D}}
\newcommand{\pfrak}{\mathfrak{p}}
\newcommand{\bfrak}{\mathfrak{b}}
\newcommand{\lgr}{\left\{}
\newcommand{\rgr}{\right\}}
\newcommand{\rra}{\right\rangle}
\newcommand{\lla}{\left\langle}
\title{Derivative of the standard $p$-adic $L$-function associated with a Siegel form}
\author{Giovanni Rosso} \thanks{Herchel Smith Postdoctoral Fellow at the University of Cambridge}
\email{\href{mailto:gr385@cam.ac.uk}{gr385@cam.ac.uk}}
\urladdr{\url{https://sites.google.com/site/gvnros/}}
\address{DPMMS,  Centre for Mathematical Sciences,
Wilberforce Road\\ Cambridge CB3 0WB United Kingdom}
\begin{document}
\maketitle
In this paper we construct a two variables $p$-adic $L$-function for the standard representation associated with a Hida family of parallel weight genus $g$ Siegel forms, using a method previously developed by B\"ocherer--Schmidt in one variable. 
When a form of weight $g+1$ is Steinberg at $p$, a trivial zero appears and, using  the method of Greenberg--Stevens, we calculate the first derivative of this $p$-adic $L$-function and show that it has the form predicted by a conjecture of Greenberg on trivial zeros.
\tableofcontents
\section{Introduction}
Let  $M$ be an irreducible motive, pure of weight $0$ over $\Q$; suppose that $s=0$ is a critical integer \`a la Deligne for $M$ and $L(M,0) \neq 0$.\\ 
We fix a prime number $p$ and let $V$ be the $p$-adic representation associated with $M$. We fix once and for all an isomorphism $\C \cong \C_p$. We suppose that we are given a regular submodule $D$ of the $(\varphi,\Gamma)$-module associated with $V$ \cite[\S 0.2]{BenLinv}. 
Conjecturally, there exists a $p$-adic $L$-function $L_p(V,D,s)$ which interpolates the special values of the $L$-function of $M$ twisted by finite-order characters of $1+p\Z_p$ \cite{PR}, multiplied by a corrective factor (to be thought of as part of the local epsilon factor at $p$) which depends on $D$. In particular, we expect the following interpolation formula at $s=0$:
 \begin{align*}
 L_p(V,D,s) = E(V,D) \frac{L(M,0)}{\Omega(M)},
 \end{align*}
for $\Omega(M)$ a complex period and $E(V,D)$ some Euler type factors which  have to be removed in order to permit $p$-adic interpolation (see \cite[\S 2.3.2]{BenLinv} or \cite[\S 6]{CoaMot}). It may happen that certain of these Euler factors vanish. In this case the connection with what we are interested in, the special values of the $L$-function, is lost. Motivated by the seminal work of Mazur--Tate--Teitelbaum \cite{MTT}, Greenberg, in the ordinary case \cite{TTT}, and Benois \cite{BenLinv} have conjectured the following:
\begin{conj}\label{MainCo}[Trivial zeros conjecture]
Let $e$ be the number of Euler-type factors of $E_p(V,D)$ which vanish. Then the order of zeros at $s=0$ of $L_p(V,D,s)$ is $e$ and
\begin{align}\label{FormMC}
\lim_{s \rightarrow 0} \frac{L_p(V,D,s)}{s^e } = \ell(V,D) E^*(V,D) \frac{L(M,0)}{\Omega(M)}
\end{align}
for $E^*(V,D) $ the non-vanishing factors of $E(V,D)$ and   $\ell(V,D)$ a non-zero number called the $\ell$-invariant as defined in \cite{BenLinv}.
\end{conj}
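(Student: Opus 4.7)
Since Conjecture \ref{MainCo} is open in general, I would focus on the case the paper targets: the standard motive attached to a Hida family of parallel-weight genus $g$ ordinary Siegel modular forms, specialised at a weight-$(g+1)$ form which is Steinberg at $p$, with $D$ the canonical ordinary filtration. The natural plan is the method of Greenberg--Stevens: extend the one-variable construction of B\"ocherer--Schmidt to the Hida family, so that the trivial zero acquires a ``weight'' direction along which it can be deformed away, and then convert the cyclotomic derivative the conjecture asks about into a weight derivative of a $U_p$-eigenvalue, which can be computed explicitly.

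The first step would be to construct a two-variable $p$-adic $L$-function $L_p(\kappa,s)$, with $\kappa$ the Hida weight variable and $s$ the cyclotomic variable, generalising B\"ocherer--Schmidt's doubling integral on $\GSp_{4g}$ to a family. Pulling back a $p$-adic family of Siegel-type Eisenstein series, pairing it against a pair of Hida families via the Petersson product, and applying the ordinary projector should produce the desired element of a two-variable Iwasawa algebra with the expected interpolation along the classical weights. The second and technically most delicate step is to isolate an \emph{improved} one-variable $p$-adic $L$-function $L_p^*(\kappa)$ satisfying
\begin{align*}
L_p(\kappa,0) \; = \; E_p(\kappa)\, L_p^*(\kappa),
\end{align*}
where $E_p(\kappa)$ is (a variant of) the Euler factor that vanishes at the Steinberg specialisation $\kappa_0$ and $L_p^*(\kappa_0)$ recovers the target quantity $E^*(V,D)L(M,0)/\Omega(M)$. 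Such an improvement is typically obtained by a slightly different choice of local data at $p$ in the doubling integral: one sacrifices interpolation in $s$ in exchange for a cleaner, non-vanishing behaviour at $s=0$.

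Given both ingredients, the derivative computation is almost formal: differentiating the factorisation in $\kappa$ at $\kappa_0$ relates $\partial_s L_p(\kappa_0,s)|_{s=0}$ to $L_p^*(\kappa_0)$ through an infinitesimal two-variable functional equation on the Hida family, with constant of proportionality equal to a logarithmic derivative of the $U_p$-eigenvalue of the family at $\kappa_0$. This is precisely the Greenberg--Stevens shape of Benois' $\ell$-invariant for the ordinary regular submodule. The principal obstacle, in my view, is not this last algebraic manipulation but the Galois-theoretic identification of the logarithmic derivative with $\ell(V,D)$ as defined via $(\varphi,\Gamma)$-modules in \cite{BenLinv}: one must analyse the big Galois representation carried by the Hida family, show that its restriction to a decomposition group at $p$ is suitably triangular, and match the Hodge-theoretic data of the infinitesimal deformation with the regular submodule $D$. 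Subsidiary difficulties include the non-vanishing of $L_p^*(\kappa_0)$ (needed to make the factorisation meaningful) and the verification that only one Euler factor vanishes in the Steinberg case, so that indeed $e=1$.
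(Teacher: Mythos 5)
Your proposal follows essentially the same route as the paper: a two-variable $p$-adic $L$-function built from the B\"ocherer--Schmidt doubling integral paired against a Hida family, an improved one-variable $L_p^*(\kappa)$ giving the factorization $L_p(\kappa,0)=E_1(\kappa)L_p^*(\kappa)$, the Greenberg--Stevens swap of the cyclotomic derivative for a weight derivative of the unit Satake parameter, and the identification of that logarithmic derivative with the Greenberg--Benois $\ell$-invariant (which the paper indeed delegates to the prior Galois-theoretic computation in \cite{RosLinv}, exactly the step you flag as the principal obstacle). You are also right that this does not prove the conjecture itself: the paper explicitly leaves open the non-vanishing of $\ell^{\mathrm{al}}(\mathrm{St}(f))\,\mathcal{L}^{(Np)}(\mathrm{St}(f),0)$, so only the shape of the derivative formula is established.
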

In this paper we shall study this conjecture for a certain $p$-adic $L$-function associated with Siegel modular forms. Let $f$ be a genus $g$ Siegel modular form of parallel weight $k$, level $\G_0(N)$ and trivial Nebentypus. For each Dirichlet character $\chi$ we can define a $L$-function $\Ll(s, \mr{St}(f),\chi)$, defined as an infinite Euler product of factors of degree $2g+1$, which corresponds, up to some Euler factors at bad primes, to the $L$-function associated with the Standard Galois representation constructed in \cite{ScholzeTor}. 
Its critical values are the integers $1 \leq s \leq k-g$ with $(-1)^{s+g}=\chi(-1)$ and $g+1-k \leq s \leq 0$ with $(-1)^{1+s+g}=\chi(-1)$.

We suppose that $f$ is ordinary for the $U_p$ operator, {\it i.e.} $U_p f=\alpha_p f$ with $\vert \alpha \vert_p = 1$. We know that $f$ can be deformed into a one variable Hida family of ordinary Siegel forms: we have a finite flat $\Z_p[[1+p\Z_p]]$-algebra $\mc O(\m F)$, quotient of the big ordinary Hecke algebra $\m T$, which parametrizes systems of eigenvalues of ordinary Siegel forms of parallel weight. Sometimes in literature this is called a $\mr{GL}_g$-ordinary family. 

We say that a point of $\mr{Spec}(\Z_p[[\Z_p^{\times}]])$ is arithmetic if it corresponds to a character 
\begin{align*}
z \mapsto \eps(z)z^k,
\end{align*}
with $\eps$ a finite order character. We say that a point $x \in \mr{Spec}(\mc O(\m F))(\C_p)$ is arithmetic if the map 
\begin{align*}
w: \mr{Spec}(\mc O(\m F)\otimes \C_p) \rightarrow \mr{Spec}(\Z_p[[X]]\otimes \C_p)
\end{align*}
is \'etale, $w(x)$ is an arithmetic point of type $z \mapsto z^k$ with $k\geq g+1$ and  the corresponding system of eigenvalues in the Hecke algebra is classical. We shall denote by $f_x$ a Siegel eigenform with this system of eigenvalues. 

As $f_x$ is ordinary, there is an obvious choice for the regular sub-module $D$ which appears in the definition of the $p$-adic $L$-function; we shall hence suppress the dependence from $D$ in the notation. This defines in particular a choice of the Satake parameters for $f_x$, as we explain in Section \ref{padicLfunctions}; we shall denote them denote by $\beta_i(f_x)$.
We define: \begin{align*}
E_1(f_x,s)= & \prod_{i=1}^g{(1-\beta_i^{-1}(f_x)p^{s+g-k})},\\
E(f_x,s)  = & E_1(f_x,s)\prod_{i=1}^g\frac{1}{(1-\beta_i(f_x)p^{k-s-g-1})}.
\end{align*}
The first result of the paper is the following:
\begin{theo}
Suppose the ``$p$-adic multiplicity one'' hypothesis of Section \ref{padicLfunctions}. We have an element $L_p(x,s) \in \mr{Frac}(\mc O(\m F) \hat{\otimes} \Z_p[[\Z_p^{\times}]])$ which satisfies, for all point $(x,s)$ arithmetic with $0\leq s \leq k-g-1$, the following interpolation property:
\begin{align*}
L_p(x,s) = E(f_x,s) \frac{\Ll^{(Np)}(\mr{St}(f),s+g+1-k)}{\Omega(f_x,s)},
\end{align*} 
where $\Ll^{(Np)}(\mr{St}(f),s)$ stands for the $L$-function without any Euler factor at $Np$ and $\Omega(f_x,s)$ is a complex period involving the Petersson norm of $f_x$ and powers of $2 \pi i$.
\end{theo}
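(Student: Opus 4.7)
The construction follows the doubling method of B\"ocherer--Schmidt, upgraded to two variables by allowing both the Siegel form and the Eisenstein series to move in $p$-adic families. The classical starting point is the integral representation of the standard $L$-function: for an ordinary Siegel form $f$ of parallel weight $k$ and a suitable Dirichlet character $\chi$, there is a Siegel Eisenstein series $E_{s,\chi}$ on $\Sp_{4g}$ whose pullback along the embedding $\Sp_{2g} \times \Sp_{2g} \hookrightarrow \Sp_{4g}$, paired with $f$ on the first factor via the Petersson product, computes $\Ll(s,\mr{St}(f),\chi)$ up to an explicit product of local zeta integrals and the Petersson norm of $f$.

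The plan is as follows. First, I would construct a two-variable $p$-adic family $\mc E$ of Siegel Eisenstein series on $\Sp_{4g}$, with parameters in $\mc O(\m F) \hat{\otimes} \Z_p[[\Z_p^{\times}]]$, whose specialisation at an arithmetic point $(x,s)$ recovers the classical Eisenstein series $E_{s+g+1-k,\chi}$ of weight compatible with $f_x$. Such a family can be built by $p$-adically interpolating $q$-expansion coefficients, using the fact that the Fourier coefficients of Siegel Eisenstein series are essentially special values of Dirichlet $L$-functions, which are known to interpolate $p$-adically. Second, restrict $\mc E$ along the diagonal embedding to obtain a $p$-adic family of Siegel forms on $\Sp_{2g} \times \Sp_{2g}$. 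Third, apply Hida's ordinary projector on the first factor and pair the result with the Hida family $\m F$ on the second factor, using the ``$p$-adic multiplicity one'' hypothesis to define a canonical linear form $\ell_{\m F}$ on the ordinary Hecke module that interpolates the normalised Petersson pairings $\langle \cdot, f_x \rangle / \langle f_x, f_x \rangle$. The output is an element $L_p(x,s) \in \mr{Frac}(\mc O(\m F) \hat{\otimes} \Z_p[[\Z_p^{\times}]])$.

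To verify the interpolation, specialise at an arithmetic point $(x,s)$ with $0 \leq s \leq k-g-1$ and unfold the doubling integral. The global part of the computation reproduces $\Ll^{(Np)}(\mr{St}(f),s+g+1-k)$, while the Petersson norm of $f_x$, together with the powers of $2\pi i$ coming from the normalisation of the Eisenstein series at the archimedean place, account for the period $\Omega(f_x,s)$. The local integrals at primes dividing $N$ contribute factors already absorbed by the removal of Euler factors at $N$.

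The main obstacle is the computation at $p$: one must show that the ordinary projector, combined with the specialisation of $\mc E$ at $p$, produces exactly the factor $E(f_x,s) = E_1(f_x,s) \prod_{i=1}^g (1-\beta_i(f_x) p^{k-s-g-1})^{-1}$, expressed through the Satake parameters selected by the ordinary line corresponding to the regular submodule. This is the local calculation at $p$ that fixes the $p$-adic normalisation; its one-variable form is the technical heart of B\"ocherer--Schmidt, and the two-variable version requires extending their analysis to show compatibility as both $k$ and $s$ vary $p$-adically. The appearance of $\mr{Frac}$ rather than integral coefficients reflects the normalisation by the self-pairing that defines $\ell_{\m F}$, which may vanish on a proper closed subscheme of $\mr{Spec}(\mc O(\m F))$.
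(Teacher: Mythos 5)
Your overall strategy coincides with the paper's: the B\"ocherer--Schmidt pullback formula, $p$-adic interpolation of the Fourier coefficients of the relevant Eisenstein series, ordinary projection, and the multiplicity-one hypothesis to extract the coefficient along $\boldF\otimes\boldF$. However, there is a genuine gap in your first step. The object whose $q$-expansion must be interpolated is not the Siegel Eisenstein series on $\Sp_{4g}$ itself, but its image under the holomorphic differential operator $\mathring{\Dfrak}_{g,t+1}^{s}$, which raises the weight by $s$ so that the pullback to $\Hbb_g\times\Hbb_g$ lands in weight $k=t+g+s$ and can be paired with $f_x$; as the cyclotomic variable moves through $0\le s\le k-g-1$, the amount of differentiation changes. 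The resulting Fourier coefficients carry the factors $\bfrak^{s}_{g,t+g}(I)$, homogeneous polynomials of degree $gs$ in the entries of $I$, and these do \emph{not} vary $p$-adically in $(k,s)$ in any naive sense; so the claim that the coefficients are ``essentially special values of Dirichlet $L$-functions'' and therefore interpolate is false as written. The missing idea is the congruence $4^{gs}\bfrak^{s}_{k}(I)\equiv 2^{gs}c^{s}_{g,k}\,{\mr{det}(2T_2)}^{s}\bmod L$, valid when $T_1/L$ and $T_4/L$ are half-integral: applying $(U_p\otimes U_p)^{2j}$ forces this divisibility modulo $p^{j}$, so after ordinary projection the operator $\mathring{\Dfrak}^{s}$ may be replaced in the limit by $\partial_2^{s}$, whose effect on Fourier coefficients (multiplication by ${\mr{det}(2T_2)}^{s}$, with $T_2$ prime to $p$ after twisting) does interpolate via $\kappa\mapsto\kappa(u_{\mr{det}(2T_2)})$. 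This is precisely why the construction is confined to the ordinary setting, and without it the two-variable Eisenstein measure cannot be built.

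A smaller inaccuracy: you locate ``the main obstacle'' in a single local computation at $p$ producing $E(f_x,s)$. In the paper this factor has two distinct origins: the denominator $\prod_{i}(1-\beta_i(f_x)p^{k-s-g-1})^{-1}$ arises from normalising by powers of the $U_p$-eigenvalue when comparing levels $p$ and $p^{2n}$, whereas the numerator $E_1(f_x,s)$ is created by the twist of the Eisenstein series by a character of $p$-power conductor (the twisting method, itself forced by the need for Kummer-type congruences), via the comparison of twisted and untwisted series. Keeping these mechanisms separate is not cosmetic: it is exactly $E_1$ that is later peeled off to define the improved $p$-adic $L$-function and run the Greenberg--Stevens argument.
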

See Theorem \ref{BigTheo} for more details on the factors that appear, for the interpolation formula at points of type  $(x,\eps,s)$,  for twists by characters $\chi$ of level prime to $p$ and for a way to interpolate $\Ll^{(p)}(\mr{St}(f),s)$. \\

This $p$-adic $L$-function will be constructed using a two-variable generalization of the method of \cite{BS} which expresses $\Ll(s,\mathrm{St}(f))$ as a double Petersson product of $f_x$ with certain Eisenstein series. The $p$-adic interpolation of the Fourier coefficients of these series is the key ingredient for the construction of the $p$-adic $L$-function.

A more general results, always starting from \cite{BS} but using an adelic language, has been given in \cite{LiuSFL} where a $g+1$-variable $p$-adic $L$-function interpolating the other critical values is constructed. 

Remark that the $p$-adic $L$-function depends on a compatible choice of $f_x$ in the family and a different choice would give a different $p$-adic $L$-function. The ``$p$-adic multiplicity one for $\m F$'' is not really necessary and is used to show that we can make such a compatible choice. It could be easily removed as in \cite[\S 7]{LiuSFL} but it helps us simplifying the interpolation formula. 

Possible denominators in the first variable are related to the non-\'etalness of the weight map $w$ and a denominator in the second variable comes from a possible pole of the Kubota--Leopoldt $p$-adic $L$-function which appears in the Fourier coefficients of the $p$-adic Eisenstein series we construct.\\

We study now when this $p$-adic $L$-function presents trivial zeroes.

If $k=g+1$ we can have $E_1(f_x,0)=0$ when $p^{-1}$ appears between  the Satake parameter of $f_x$. We shall say that such an $f_x$ is $\G_0(p)$-Steinberg (see \ref{examples} for a precise definition). If $f_x$ comes from a form of level prime to $p$ we have 
\begin{align*}
E(f_x,s) \Ll^{(p)}(\mathrm{St}(f),s+g+1-k) = & E_1(f_x,s)(1-p^{k-s-g-1})\prod_{i=1}^g{(1-\beta^{-1}_i p^{k-s-g-1})}\Ll(\mathrm{St}(f),s+1+g-k).
\end{align*}
This implies that for $s=k-g-1$ the $p$-adic $L$-function is identically zero. The observation that $\beta_ip^{k-g}$ is an analytic function on $\m F^{\mr {rig}}$ makes one realize that it is possible to exploit the strategy of Greenberg--Stevens to calculate the $p$-adic derivative of $L_p(f,s)$ when $f$ is $\G_0(p)$-Steinberg. 

Indeed, generalizing the paper \cite{RosPB} where the case $g=1$ (the symmetric square of a modular form) has been dealt, one can modify the construction of the Eisenstein family; this allows us to define a second $p$-adic $L$-function $L_p^*(x)$. This new $p$-adic $L$-function satisfies the following equality of locally analytic functions around $f$
\begin{align*}
L_p(x,0)=E(f_x,0)L_p^*(x)
\end{align*}
  and moreover it precisely interpolates the values $\Ll(\mathrm{St}(f),0)$. We can then prove the main theorem of the paper:
  
  \begin{theo}
Let $f$ be a Siegel form of weight $g+1$ and trivial Nebentypus. Suppose that $f$ is $\G_0(p)$-Steinberg and the weight projection $w$ is \'etale at the corresponding point $x$. Then we have:
\begin{align*}
\frac{\textup{d} }{\textup{d}s}{L_p(f,s)}_{\vert_{s=0}}=\ell^{\mathrm{al}}(\mathrm{St}(f))E^*(f) \frac{\Ll^{(Np)}(\mr{St}(f),0)}{\Omega(f)},
\end{align*}
where $\ell^{\mathrm{al}}(\mathrm{St}(f))$ is the Greenberg--Benois $\ell$-invariant as calculated in \cite{RosLinv} and 
\begin{align*}
E^*(f)= & \frac{\prod_{i=2}^g{(1-\beta_i^{-1}p^{-1})}}{\prod_{i=1}^g(1-\beta_ip)}.
\end{align*}
\end{theo}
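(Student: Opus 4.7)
The plan is a two-variable Greenberg--Stevens argument, combining the factorization through the ``improved'' $p$-adic $L$-function $L_p^*(x)$ recalled in the introduction with an identical vanishing of $L_p$ along a second direction.

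First I identify the two input identities. Near $x=f$ we have
\begin{align*}
L_p(x,0) = E(f_x,0)\,L_p^*(x).
\end{align*}
The $\G_0(p)$-Steinberg hypothesis forces one Satake parameter, say $\beta_1$, to equal $p^{-1}$ at $f$, so $E_1(f,0)=0$ and hence both sides vanish at $x=f$. Next, the displayed formula in the introduction contains the factor $(1-p^{k_x-s-g-1})$ which vanishes on the locus $s=k_x-g-1$. Since classical arithmetic points are Zariski dense, this forces the rigid analytic identity $L_p(x,k_x-g-1)\equiv 0$ in a neighbourhood of $f$ in $\mr{Spec}(\mc O(\m F)\wot\Z_p[[\Z_p^\times]])$.

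Choose a uniformizer $T$ of $\mc O(\m F)$ at $f$, and let $k(T)$ denote the weight function; the \'etaleness hypothesis gives $k'(0)\neq 0$. Differentiating $L_p(T,k(T)-g-1)\equiv 0$ at $T=0$ gives
\begin{align*}
\partial_T L_p(f,0) + k'(0)\,\partial_s L_p(f,0) = 0,
\end{align*}
while differentiating $L_p(T,0) = E(f_T,0)L_p^*(T)$ at $T=0$, using $E(f,0)=0$, yields
\begin{align*}
\partial_T L_p(f,0) = \bigl(\partial_T E(f_T,0)\bigr)|_{T=0}\cdot L_p^*(f).
\end{align*}
Eliminating $\partial_T L_p(f,0)$ produces
\begin{align*}
\partial_s L_p(f,0) = -\frac{\bigl(\partial_T E(f_T,0)\bigr)|_{T=0}}{k'(0)}\,L_p^*(f).
\end{align*}

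It remains to recognise the two factors on the right. In $E(f_x,0)$ only the first factor of $E_1$, namely $\bigl(1-\beta_1(f_x)^{-1}p^{g-k_x}\bigr)$, vanishes at $f$; consequently $\partial_T E(f_T,0)|_{T=0}$ is the derivative of that single factor with respect to $T$, multiplied by the evaluation at $f$ of all the remaining Euler factors, and these assemble exactly into $E^*(f)$ as in the statement. Dividing by $k'(0)$ converts the weight-coordinate derivative into the logarithmic derivative of $\beta_1$ along the family at $f$, which is precisely the quantity computed to be the Greenberg--Benois $\ell$-invariant $\ell^{\mr{al}}(\mr{St}(f))$ in \cite{RosLinv}. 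Finally, the construction of $L_p^*(x)$ gives $L_p^*(f)=\Ll^{(Np)}(\mr{St}(f),0)/\Omega(f)$, since by design the improved function removes only the single vanishing Euler factor and thus interpolates the $L$-value with the remaining Euler factors already separated off. Assembling the three identifications gives the formula.

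The main obstacle I anticipate is the rigorous verification that $L_p(x,k_x-g-1)$ vanishes as a \emph{rigid analytic} function on the relevant subspace, not merely at classical arithmetic points: one has to check that the possible denominators of $L_p$ flagged in the introduction (coming from non-\'etaleness of $w$ and from the Kubota--Leopoldt pole) do not interfere with the vanishing, and that classical points really are dense on $\{s=k_x-g-1\}$ near $f$. The rest is a careful bookkeeping of non-vanishing Euler factors and a direct appeal to \cite{RosLinv} for the identification with $\ell^{\mr{al}}$.
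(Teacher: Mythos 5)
Your proposal is correct and follows essentially the same route as the paper: the Greenberg--Stevens two-variable trick, combining the identical vanishing of $L_p$ along the line $s=k-g-1$ (forced by the extra Euler factor $(1-p^{t-1})$ at classical points of level prime to $p$, plus density) with the factorization through the improved function $L_p^*$ at $s=0$, and then identifying the weight-derivative of the vanishing Euler factor, i.e.\ $-\mathrm{d}\mathds{B}_g/\mathrm{d}k$, with $\ell^{\mathrm{al}}(\mathrm{St}(f))$ via \cite{RosLinv}. The only cosmetic difference is that you carry an explicit uniformizer $T$ and the factor $k'(0)$, whereas the paper uses \'etaleness to take the weight $k$ itself as the local coordinate.
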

We conclude the introduction with the remark that this does not prove Conjecture \ref{MainCo} as we do not know if $\ell^{\mathrm{al}}(\mathrm{St}(f))\Ll^{(Np)}(\mr{St}(f),0)$ is not vanishing.

Conjecturally the $\ell$-invariant should not vanish but practically nothing is known at the moment.

The $L$-value can indeed vanish as we are interpolating an imprimitive $L$-function; for certain primes $l\mid N$ we have removed the factor $(1-l^{0})=0$. In this case a study of the second derivative should be necessary;  to the author, this problem seems very hard.\\

\textbf{Acknowledgement} The idea for this work originated during a first visit, founded by the NSF grant  FRG-DMS-0854964, of the author to \'Eric Urban which we thank for the numerous insights and suggestions. We also thank Zheng Liu for sharing with us her preprints and many ideas concerning this work, Jacques Tilouine for  useful conversations on ordinary Siegel forms and Arno Kret for pointing out the paper \cite{ShinPlan}. This work has greatly benefited from an excellent long stay at Columbia Univesity, founded by a FWO travel grant V4.260.14N. 

\section{Notation on Siegel Forms}\label{Sforms}
We now recall the basic theory of (parallel weight) Siegel modular forms. We follow closely the notation of \cite{BS} and we refer to the first section of {\it loc. cit.} for more details. 
Let us denote by $\Hbb_g$ the Siegel space for $\mathrm{GSp}_{2g}$. We have explicitly
\begin{align*}
 \Hbb_{g} = \lgr \left.Z = \left( \begin{array}{cc}
                           Z_1 & Z_2 \\
                           Z_3 & Z_4
                          \end{array}
\right) \right\vert  {Z}^t= Z \mbox{ and } \mathrm{Im}(Z) >0 \rgr.
\end{align*}
It has a natural action of $\mathrm{GSp}^+_{2g}(\R)$ via fractional linear transformations; for any $M = \left( \begin{array}{cc}
                           A & B \\
                           C & D
                          \end{array}
\right)$ in $\mathrm{GSp}^+_{2g}(\R)$ and $Z$ in $\Hbb_{2g}$ we define 
\begin{align*}
 M(Z)= (AZ+B){(CZ+D)}^{-1}.
\end{align*}
For any function $F: \Hbb_g \rightarrow \C$ we define the weight $k$ action
\begin{align*}
F\vert_k M(Z):= F(M(Z)){\mr{det}(CZ+D)}^{-k}\mr{det}(M)^{k/2}.
\end{align*}
Let $\G=\G_0(N)$ be the congruence subgroup of $\mathrm{Sp}_{2g}(\Z)$ of matrices whose lower block $C$ is congruent to $0$ modulo $N$. We consider the space  $M^{(g)}_k(N,\phi)$ of scalar Siegel forms of weight $k$ and Nebentypus $\phi$:
\begin{align*}
 \lgr F: \Hbb_g \rightarrow \C \left\vert  F\vert_k M(Z) = \phi(M)F(Z) \;\:\forall M \in \G, f \textnormal{ holomorphic} \right.\rgr .
\end{align*}
Each $F$ in $M_k^{(g)}(\G,\phi)$ admits a Fourier expansion 
\begin{align*}
 F(Z)= \sum_{T} a(T)e^{2 \pi i \mathrm{tr}(TZ)},
\end{align*}
where $T=\left(  \begin{array}{cc} T_1 & T_2 \\
^t T_2 & T_4                                                                                                                                                            \end{array}
\right)$ ranges over all matrices $T$ positive and semi-defined, with $T_1$, $T_4$ integral matrices and and $T_2$ half-integral.\\

 We have two embeddings (of algebraic groups) of $\mathrm{Sp}_{2g}$ in $\mathrm{Sp}_{4g}$. For all algebra $R$, we have:
\begin{align*}
\mathrm{Sp}_{2g}^{\uparrow}(R) = & \lgr \left( \begin{array}{cccc}
                           a & 0 & b & 0  \\
                           0 & 1_g & 0 & 0 \\
c & 0 & d & 0  \\
                           0 & 0 & 0 & 1_g
                          \end{array}  \right) \left\vert  \left( \begin{array}{cc}
                           a & b \\
                           c & d
                          \end{array}
\right) \right. \in \Sp_{2g}(R)\rgr, \\
\mathrm{Sp}_{2g}^{\downarrow}(R) = & \lgr \left( \begin{array}{cccc}
                           1_g & 0 & 0 & 0  \\
                           0 & a & 0 & b \\
0 & 0 & 1_g & 0  \\
                           0 & c & 0 & d
                          \end{array}  \right) \left\vert  \left( \begin{array}{cc}
                           a & b \\
                           c & d
                          \end{array}
\right) \right.\in \Sp_{2g}(R)\rgr .
\end{align*}
We can embed $\Hbb_g \times \Hbb_g$ in $\Hbb_{2g}$ in the following way:
\begin{align*}
 (z_1, z_4) \mapsto  \left( \begin{array}{cc}
                           z_1 & 0 \\
                           0 & z_4
                          \end{array}
\right).
\end{align*}
This embedding is compatible with the action of $\mathrm{GSp}^+_{2g}(\R)$. For all $\gamma$  we have:
\begin{align*}
 \gamma^{\uparrow} \left( \begin{array}{cc}
                           z_1 & 0 \\
                           0 & z_4
                          \end{array}
\right) = \left( \begin{array}{cc}
                           \gamma(z_1) & 0 \\
                           0 & z_4
                          \end{array}
\right),\\
\gamma^{\downarrow} \left( \begin{array}{cc}
                           z_1 & 0 \\
                           0 &   z_4
                          \end{array}
\right) = \left( \begin{array}{cc}
                           z_1 & 0 \\
                           0 & \gamma (z_4)
                          \end{array}
\right).
\end{align*}
Consequently, evaluation at $z_2=0$ gives us the following map:
\begin{align*}
 M^{(2g)}_k(N,\phi) \hookrightarrow M^{(g)}_k(N,\phi)\otimes_{\C} M^{(g)}_k(N,\phi).
\end{align*}
This also induces a closed embedding of two copies of the Siegel variety of genus $g$ in the Siegel variety of genus $2g$. On points, it corresponds to abelian varieties of dimension $2g$ which decompose as the product of two abelian varieties of dimension $g$.

We define Hecke operators as double cosets operators; let $\gamma$ be an element of $\mathrm{GSp}_{2g}(\Q)$ with positive determinant and $\G_1$, $\G_2$ two congruences subgroups. We write $\G_1 \gamma \G_2 = \sqcup_i \G_1 \gamma_i$ and we define the Hecke operator of weight $k$  
$$ f\vert_k [\G_1 \gamma \G_2]= \sum_{\gamma_i}f\vert_k \gamma_i.$$
We denote by  $\tau_{N}$ the operator $\left[\G_0(N) \left(  \begin{array}{cc}
0 & -1 \\
N & 0                                                                                                                                                      \end{array}
\right)\G_0(N)\right]$. This operator is a generalization of the Atkin--Lehner involution for $\mr{GL}_2$ and is sometime called the Fricke involution. For for any Hecke operator $T$ we define the dual operator $T^*=\tau_N^{-1}T\tau_N$. Once we shall define the Petersson scalar product, $T^*$ will indeed be the dual operator of $T$ for this product at level $\G_0(N)$.

Finally we define the Hecke operator $U_p$ of level $\G_0(p)$ and weight $k$ to be $$ \vert_k {p^{\frac{gk-g(g+1)}{2}}}\left[\G_0(p)\left( \begin{array}{cc}
                           1_g &    \\
                           & p1_{g} 
                          \end{array}
 \right)\G_0(p)\right].$$ If $F(Z)=\sum_{T} a(T)e^{2 \pi i \mathrm{tr}(TZ)}$ we have  from \cite[Proposition 3.5]{HPEL}:
 $$U_pF(Z)=\sum_{T} a(pT)e^{2 \pi i \mathrm{tr}(TZ)}.  $$


\section{Eisenstein series}\label{EisenGSp}
The aim of this section is to recall certain Eisenstein series which will be used to construct the $p$-adic $L$-function as in \cite{BS}. In {\it loc. cit.} the authors consider certain Eisenstein series for $\mathrm{GSp}_{4g}$ whose pullback to two embedded copies of the Siegel variety for $\mathrm{GSp}_{2g}$  is a \textbf{holomorphic} Siegel modular form.
 We now fix a (parallel weight) Siegel modular form $f$ of genus $g$. 
We shall define the standard $L$-function for $f$ $\Ll(\mr{St}(f),s)$ as an Eulerian product. We shall give an integral expression for this $L$-function as a double Petersson product between $f$ and these  Eisenstein series (see Proposition \ref{InteExpr}). Note that for $g=1$ the standard $L$-function of $f$ coincides, up to a twist, with the symmetric square $L$-function of $f$.\\
These Eisenstein series are essentially Ma\ss{}--Shimura derivative of classical Siegel Eisenstein series \cite{Sh7} and the integral formulation of the $L$-function is the classical pullback formula of Garret and Piateski-Shapiro--Rallis. 

The study of the algebraicity of these differential operators has already been exploited in \cite{Har1,Har2} to show that certain values of the standard $L$-function, naturally normalized, are algebraic.

The approach of B\"ocherer--Schmidt consists instead in using a holomorphic differential operator in place of the Maa\ss{}--Shimura operators.

The two develop then a {\it twisting method} which allow to define Eisenstein series whose Fourier coefficients satisfy Kummer's congruences when the character associated with the Eisenstein series varies $p$-adically. 
This is the key for their construction of the one variable (cyclotomic) $p$-adic $L$-function and of our two-variable $p$-adic $L$-function.

 When the character is trivial modulo $p$ there exists a simple relation between the twisted and the not-twisted Eisenstein series \cite[\S 6 Appendix]{BS} which introduces certain Euler factors at $p$ in the interpolation formula for the $p$-adic $L$-function. We shall construct then an {\it improved} $p$-adic $L$-function without these Euler factors (renouncing to the cyclotomic variable) interpolating directly the not-twisted Eisenstein series.

\subsection{Some differential operators}\label{DiffOp}
In this section we recall some holomorphic differential operators used by B\"ocherer--Schmidt. Let $k$ and $s$ be two positive integers; in \cite[(1.15)]{BS}, the authors define a differential operators $\mathring{\Dfrak}_{g,k}^{s} $.  This operator sends a holomorphic function, automorphic of weight $k$ on $\Hbb_{2g}$ to a holomorphic function on $\Hbb_{g} \times \Hbb_{g}$ automorphic of weight $k+s$ in both the first and the second variable. A structure theorem for differential operators on nearly automorphic forms \cite[Proposition 3.4]{ShNHAF} ensure us that this operator is a scalar multiple, depending on $k$ and $s$, of the Ma\ss{}--Shimura operator composed with  the holomorphic projection.
 
For each symmetric matrix $I=\left( \begin{array}{cc}
T_1 & T_2 \\ 
T_2^t & T_4
\end{array}\right)$ of size $2g$, they define a number $\bfrak^{s}_k(I)$  satisfying: 
\begin{align*}
\mathring{\Dfrak}_{g,k}^{s} (e^{\mr{tr}(IZ)})=\bfrak^{s}_k(I)e^{\mr{tr}(T_1z_1+T_4z_4)}.
\end{align*}
If $I$ varies, $\bfrak^{s}_k(I)$ is a polynomial in the entries of $T_i$, homogeneous of degree $sg$. If $\frac{1}{L}T_1$ and $\frac{1}{L}T_4$ are both half-integral then we have the congruence \cite[(1.34)]{BS}:
\begin{align}\label{congru}
4^{gs}\bfrak^{s}_k(I) \equiv 2^{gs}c^s_{g,k}{\mr{det}(2T_2)}^s \bmod L.
\end{align}
This relation (applied for $L=p^n$) tell us that $(U_p^n \otimes U_p^n)\mathring{\Dfrak}_{g,k}^{s}$ is, as a differential operator, modulo $p$ very similar to $(U_p^n \otimes U_p^n) \partial_2^s$, where we denote by $$\partial_2 :=\mr{det} \left(\frac{1}{2}\frac{\partial}{\partial z_{i,j}}\right) \mbox{ for } i=1,\ldots,g \mbox{ and } j=g+1,\ldots,2g.$$ This relation will be the key to interpolate $p$-adically the ordinary projection of the Eisenstein series we are about to introduce.

\subsection{Some Eisenstein series}

We fix a weight $k$, an integer $N$ prime to $p$ and a Nebentypus $\phi$. Let $f$ be an eigenform in $M^{(g)}_k(Np,\phi)$.
Let $R$ be an integer coprime with $N$ and $p$ and $N_1$ a positive integer such that $ N_1 \mid N$.
 We fix a Dirichlet character $\chi$ modulo $N_1Rp$ which we write  as $\chi_1\chi' \eps_1$, with $\chi_1$ defined modulo $N_1$, $\chi'$  primitive modulo $R$ and $\eps_1$ defined modulo $p$. Let us denote by $S$ the product of all prime dividing $Rp$.
Let $t\geq 1$ be an integer and let ${\F^{t+1}\left(z_1,z_4,R^2 N^2 p^{2n},\phi,u\right)}^{(\chi)}$ be the twisted Eisenstein series of \cite[(5.3)]{BS}. We define
\begin{align*}
\h'_{L,\chi}(z_1,z_4,u) := & L(t+1+2s,\phi\chi) \mathring{\Dfrak}_{t+1}^{s}\left({\F^{t+1}\left(\left(\begin{array}{cc}
z_1 & 0 \\
0 & z_4
\end{array} \right),R^2 N^2 p^{2n}S,RN_1p^n,\phi,u\right)}^{(\chi)}\right) \left\vert^z U_{L^2} \right. \left\vert^w U_{L^2}\right. 
\end{align*}
for $s$ a non-negative integer and $p^{n} \mid L$, with $L$ a $p$-power. Note that this Eisenstein series also depends from a real variable $u$. It is a form of level $\G_0(N^2R^2p) \times \G_0(N^2R^2p)$ and weight $k=t+g+s$.\\
We shall sometimes choose $L=1$ and in this case the level is $N^2R^2p^{2n}$.\\
For any prime number $q$ and matrices $I$ as in the previous section, let $B_q(X,I)$ be the polynomial  of degree at most $2g -1$ of \cite[Proposition 5.1]{BS}. 
Let $\eta$ be and $T_2$ a semi-positive definite matrix of size $g \times g$. For a positive integer $t$ we define:
\begin{align*}
 B_{2g}(t) & = {(-1)}^{g(g+t)} \frac{2^{g+ 2gt}}{\G_{2g}\left( g + \frac{1}{2} \right)} \pi^{g+2g^2}, \\
\G_g(s) & = \pi^{\frac{g(g+1)}{4}} \prod_{i=1}^g \G\left(s - \frac{i-1}{2} \right),\\
G_g(T_2,N,\chi) & = \sum_{X \in M_g(\Z) \bmod N} \eta(\mr{det} X) e^{2 \pi i \mr{tr}\left( \frac{1}{L} T_2 X\right)}. 
\end{align*}
When $\chi$ is of conductor $C$ we have \cite[Proposition 6.1]{BS}:
\begin{align*}
G_g(T_2,C,\chi) & =   C^{\frac{1}{2}g(g-1)}\chi^{-1}(\mathrm{det}(T_2)){G(\chi)}^g.
\end{align*}
We deduce easily from \cite[(7.3),(7.14)]{BS} the following theorem :
\begin{theo}\label{FourierEis}
 The  Eisenstein series defined above evaluated at $u=\frac{1}{2}-t$ has the following Fourier development:
\begin{align*}
 \h'_{L,t,\chi}(z_1,z_4) =&  {(Rp^n)}^{\frac{g(g-1)}{2}} B_{2g}(t) {(2 \pi i )}^{sg}{G\left({\chi' \eps}\right)}^g  \sum_{T_1 \geq 0} \sum_{T_4 \geq 0 } \\
 & \left( \sum_I  \bfrak_{g,t+g}^{s}(I)G_g(T_2,N,\chi_1) {\chi'\eps}^{-1}(\mathrm{det}(2T_2)) \sum_{G \in \mathrm{GL}_2(\Z) \setminus \mathbf{D}(I)}(\phi\chi)^2(\mathrm{det}(G)){\vert \mathrm{det}(G) \vert}^{2t-1} \right. \\
& \left. \;\:\;\: L(1-t,\sigma_{-\mathrm{det}(2I)}\phi\chi)\prod_{q \mid\mathrm{det}(2G^{-t}IG^{-1}) } B_q\left(\chi\phi(q)q^{t-g-1},G^{-t}IG^{-1}\right) \right) e^{{2 \pi i} \mr{tr}(T_1 z_1 + T_4 z_4)} ,
\end{align*}
where the sum over $I$ runs along the matrices $\left(  \begin{array}{cc}
L^2 T_1 & T_2 \\
T_2 & L^2 T_4                                                                                                                                                            \end{array}
\right)$  positive definite and with $2 T_2 \in M_{g}(\Z)$, $\chi(M)=0$ if $(M,S)\neq 1 $ and 
\begin{align*}
 \mathbf{D}(I) = \left\{ G \in M_{2g}(\Z) \vert G^{-t}IG^{-1} \textnormal{ is a half-integral symmetric matrix} \right\}.
\end{align*}
\end{theo}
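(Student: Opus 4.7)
The plan is to start from the explicit Fourier expansion of the Siegel Eisenstein series $\F^{t+1}(Z, R^2N^2p^{2n}S, \phi, u)$ on $\Hbb_{2g}$, which is recorded in \cite[Proposition 5.1]{BS}. Restricting to the diagonal $Z = \mathrm{diag}(z_1, z_4)$ and evaluating at $u = \frac{1}{2}-t$, its $I$-th Fourier coefficient is already a sum of the classical shape: an archimedean constant proportional to $B_{2g}(t)$, a finite summation $\sum_{G\in\mathrm{GL}_g(\Z)\backslash\mathbf{D}(I)}\phi^2(\det G)|\det G|^{2t-1}$, a Dirichlet $L$-value $L(1-t,\sigma_{-\det 2I}\phi)$, and a product of local polynomials $\prod_{q\mid\det(2G^{-t}IG^{-1})}B_q(\phi(q)q^{t-g-1}, G^{-t}IG^{-1})$.

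I would then trace the effect of the twist by $\chi = \chi_1\chi'\eps_1$ as in \cite[\S 7]{BS}. The twisting construction (5.3) replaces each Fourier coefficient by a finite character sum, which factors according to the conductor decomposition: the part $\chi_1$ of conductor dividing $N$ contributes the finite Gauss sum $G_g(T_2, N, \chi_1)$; the primitive part $\chi'\eps_1$ of conductor $Rp^n$ contributes, via \cite[Proposition 6.1]{BS}, the factors $(\chi'\eps)^{-1}(\det(2T_2))\,G(\chi'\eps)^g$ together with the power $(Rp^n)^{g(g-1)/2}$; and $\chi$ is inserted into the Dirichlet $L$-value and the polynomials $B_q$. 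This is exactly \cite[(7.3)]{BS}.

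Next I would apply $\mathring{\Dfrak}_{t+1}^s$ term by term. Its defining property $\mathring{\Dfrak}_{t+1}^s(e^{2\pi i\,\mathrm{tr}(IZ)}) = (2\pi i)^{sg}\,\bfrak_{g,t+g}^{s}(I)\, e^{2\pi i\,\mathrm{tr}(T_1 z_1 + T_4 z_4)}$---with $(2\pi i)^{sg}$ arising from the homogeneity of $\bfrak^s$ of degree $sg$ in the entries of $I$---multiplies the $I$-th coefficient by $\bfrak_{g,t+g}^s(I)$ and changes the exponential to $e^{2\pi i\,\mathrm{tr}(T_1 z_1 + T_4 z_4)}$. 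Then applying $U_{L^2}$ in each variable, using $U_pF(Z)=\sum_T a(pT)e^{2\pi i\,\mathrm{tr}(TZ)}$, restricts the outer sum to matrices $I$ whose diagonal blocks are $L^2T_1$ and $L^2T_4$, exactly as in the statement. Finally, multiplying by $L(t+1+2s,\phi\chi)$ cancels the completing factor built into the definition of $\F^{t+1}$ and produces the stated formula.

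The main obstacle is the careful bookkeeping across the three factors of $\chi$: one must check that the non-primitive part $\chi_1$ really produces the single Gauss sum $G_g(T_2,N,\chi_1)$ (rather than a product of smaller ones), and that the $U_{L^2}$ operator commutes with the twisting in the sense that inflating the diagonal entries of $I$ by a factor of $L^2$ does not affect the primitive-conductor Gauss sum $G(\chi'\eps)^g$ nor the character values $(\chi'\eps)^{-1}(\det(2T_2))$, since $(L,Rp^n)$ involves only $p$-powers already absorbed by $\eps_1$. Once these verifications are in place, everything else reduces to direct substitution into \cite[(7.3), (7.14)]{BS}.
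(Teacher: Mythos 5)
Your proposal follows essentially the same route as the paper: the theorem is deduced directly from the Fourier expansion and twisting formulas \cite[(7.3),(7.14)]{BS}, and your unpacking of the twist, the action of $\mathring{\Dfrak}_{t+1}^{s}$, and the $U_{L^2}$ operators is just a more detailed account of that same deduction. The paper's own proof is a one-line remark that the only deviation from \emph{loc.\ cit.} is the omission of the slash action by $\left(  \begin{array}{cc} 1 & 0 \\ 0 & N^2S \end{array}\right)$, which your direct derivation makes unnecessary to invoke.
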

\begin{proof}
The only difference from {\it loc. cit.} is that we do not apply $\left\vert\left(  \begin{array}{cc}
1 & 0 \\
0 & N^2S                                                                                                                                                            \end{array}
\right) \right.$.
\end{proof}
Note that each sum over $I$ is finite because $I$ must have positive determinant. Moreover, we can rewrite it as a sum over $T_2$, with $(2\mr{det}(T_2),N_1Rp)=1$ and $T_1T_4 - T^2_2 >0$.\\

It is proved in \cite[Theorem 8.5]{BS} that (small modifications of) these functions $\h'_{L,\chi}(z,w)$ satisfy Kummer's congruences. The key fact is what  they call the twisting method \cite[(2.18)]{BS}; the Eisenstein series ${\F^{t+1}\left(z,R^2N^2p^{2n},\phi,u \right)}^{(\chi)} $ are obtained  weighting ${\F^{t+1}\left(z,R^2N^2p^{2n},\phi,u \right)}$ with respect to $\chi$ over integral matrices modulo $NRp^n$. To ensure these Kummer's congruences, even when $p$ does not divide the conductor of $\chi$, the authors are forced to consider $\chi$ of level divisible by $p$. Using nothing more than Tamagawa's rationality theorem for $\gl_n$, they find the relation \cite[(7.13')]{BS}:
\begin{align*}
{\F^{t+1}\left(z,R^2N^2p^2, \phi,u\right)}^{(\chi)} = {\F^{t+1}\left(z,R^2N^2p^2, \phi,u\right)}^{(\chi_1\chi')}\left\vert \left( \sum_{i=0}^g{(-1)}^i p^{\frac{i(i-1)}{2}} p^{-ig}\sum_j \left(  \begin{array}{cc}
1 & S(g_{i,j}) \\
0 & 1                                                                                                                                                      \end{array}
\right)  \right) \right. 
\end{align*}
where $g_{i,j}$ runs along the representative of the double cosets in \cite[(2.38)]{BS} and $S(X) $ is the $2g \times 2g$ antidiagonal (in blocks) matrix with $X$ and $X^t$ on the antidiagonal.\\
Consequently, the Eisenstein series we want to interpolate to construct the improved $p$-adic $L$-function is the one appearing inside formula (2.25') of \cite{BS}, namely:
\begin{align*}
{\h'}^*_{L,t,\chi'}(z_1,z_4) := & L(t+1+2s,\phi\chi) {\mathring{\Dfrak}_{t+1}^{s}\left({\F^{t+1}\left(\left(\begin{array}{cc}
z_1 & 0 \\
0 & z_4
\end{array} \right),R^2 N^2 p^2,\phi,u\right)}^{(\chi_1\chi')}\right)}_{\vert_{u=\frac{1}{2}-t}}. 
\end{align*}
In what follows, we shall be interested only in the case where  $t=k - g$.\\
Let $f \in M^{(g)}_k(\G_0(N),\phi)$ as before and $\T_N$ be the abstract Hecke algebra for $\G_0(N)$. We can decompose $\T_N$ as a restricted product $\bigotimes'_q \T_{N,q}$. We suppose that $f$ is an eigenform for $\T_N$. If $q \nmid N$, let us denote by $\alpha_{q,1}^{\pm 1},\ldots, \alpha_{q,g}^{\pm}$ the Satake parameters associated with $f$; they are well defined up to the action of the Weyl group of $\GSp_{2g}$. If $q \mid N$, we denote by $\alpha_{q,1},\ldots, \alpha_{q,g}$ the Satake parameters associated with $f$, well defined up to permutation. 

For each Dirichlet character $\chi$ we define the standard $L$-function $\Ll(\mathrm{St}(f),\chi,s)$ associated with $f$ and $\chi$  as the infinite product:
\begin{align*}
\Ll(\mathrm{St}(f),\chi,s)=& \prod_{q}{D_q(\chi(q)q^{-s})}^{-1},
\end{align*}
where \begin{align*}
D_q(T)=& (1-\phi(q)T) \prod_{i=1}^g(1-\phi(q)\alpha_{q,i}^{- 1}T)(1-\phi(q)\alpha_{q,i}T)  \mbox{ if } q \nmid N,\\
D_q(T)=& \prod_{i=1}^g(1-\alpha_{q,i}T)  \mbox{ if } q \mid N.
\end{align*}
This $L$-function differs from the (conjectural) motivic/automorphic $L$-function $L(\mathrm{St}(f)\otimes \chi\phi,s)$ by a finite number of Euler factors at primes dividing $N$. See \cite[Table A.10]{RSLocal} for these local factors when $g=2$. We shall also write 
\begin{align*}
\Ll^{(N_1)}(\mathrm{St}(f),\chi,s)=& \prod_{q \nmid N_1}{D_q(\chi(q)q^{-s})}^{-1}.
\end{align*}

In what follows, for two forms $f$ and $g$ of level $\G$ and weight $k$, we shall denote by $\lla f,g \rra$ the normalized Petersson product:
\begin{align*}
{\lla f,g \rra}_{\G}:=\int_{\Hbb_g/\G} f(z) \overline{g(-\overline{z})} \mr{det}(\mr{Im}(z))^k.
\end{align*}

We conclude with the integral formulation of $\Ll(\mathrm{St}(f),\chi,s)$ \cite[Theorem 3.1, Proposition 7.1 case (7.13)]{BS}. 
\begin{prop}\label{InteExpr}
 Let $f$ be a form of weight $k$, Nebentypus $\phi$. We  put $t+s = k-g$ and $\h'= \h'_{1,t,\chi}(z,w)$; we have
\begin{align*}
\lla f(w)\left\vert \left(  \begin{array}{cc}
0 & -1 \\
N^2p^{2n} R^2 & 0                                                                                                                                                      \end{array}
\right)   \right.,  \h'\rra  = & \frac{\Omega_{k,s}({\frac{1}{2}-t}) p_{\frac{1}{2}-t}(t+g)}{\chi(-1) d_{\frac{1}{2}-t}(t+g)} {(R^2N^2 p^{2n}) }^{\frac{g(g+1)-gk}{2}} {(N_1Rp^n)}^{g(k-t)}  \\
& \times \chi(-1)^g{(-1)^{gk}} \Ll^{(N_1)}(1-t,\mathrm{St}(f),\chi^{-1})    f(z)\vert U_{N^2/N_1^2} 
\end{align*}
where the Petersson norm is for forms of level $\G_0({N^2p^{2n} R^2})$ and $\frac{ p_{\frac{1}{2}-t}(t+g)}{d_{\frac{1}{2}-t}(t+g)}\Omega_{k,s}\left( \frac{1}{2} -t\right)$ is an explicit products of $\G$-functions (see Proof of Theorem \ref{BigTheo}).

If $\chi$ is trivial modulo $p$, we let ${\h'}^*={\h'}_{1,t,\chi'}^*(z,w)$ and we have:
\begin{align*}
\lla f(w)\left\vert \left(  \begin{array}{cc}
0 & -1 \\
N^2p^2 R^2 & 0                                                                                                                                                      \end{array}
\right)   \right.,  {\h'}^*\rra
p^{-\frac{g(g+1)}{2}} \prod_{i=1}^{g}(1-\beta_i^{-1}(f_x)\chi^{-1}(p)p^{-t})= \lla f(w)\left\vert \left(  \begin{array}{cc}
0 & -1 \\
N^2p^{2n} R^2 & 0                                                                                                                                                      \end{array}
\right)   \right.,  \h'\rra.
\end{align*}

\end{prop}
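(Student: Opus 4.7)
The strategy is the classical pullback/doubling integral of Garrett and Piatetski-Shapiro--Rallis, in the refined form of \cite[Sections~3 and~7]{BS}, combined with the three additional ingredients present in our Eisenstein series: the twist by $\chi$, the holomorphic differential operator $\mathring{\Dfrak}_{t+1}^{s}$, and the Fricke-type involution used to normalize the Petersson product. The first identity is essentially a repackaging of \cite[Theorem~3.1, Proposition~7.1]{BS} applied to the Eisenstein series appearing in Theorem~\ref{FourierEis}, with the slight modification already recorded in the proof of that theorem; the power ${(R^2N^2p^{2n})}^{(g(g+1)-gk)/2}$ and the sign $\chi(-1)^g(-1)^{gk}$ come from the Fricke involution at level $\G_0(N^2R^2p^{2n})$, the factor ${(N_1Rp^n)}^{g(k-t)}$ from the normalization of the Fourier coefficients at $u=\tfrac{1}{2}-t$, and the operator $U_{N^2/N_1^2}$ from the descent of the level after unfolding.

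I would proceed in three steps for the first identity. First, apply the pullback formula to the untwisted and undifferentiated Eisenstein series $\F^{t+1}(Z,R^2N^2p^{2n},\phi,u)$ restricted to $\Hbb_g\times\Hbb_g$ via $z_2=0$: unfolding against $f\otimes\overline{f}$ yields the complete standard $L$-function times an archimedean zeta integral and ramified local factors. Second, insert the differential operator $\mathring{\Dfrak}_{t+1}^{s}$: by the structure theorem \cite[Proposition~3.4]{ShNHAF} recalled in Section~\ref{DiffOp}, the operator, composed with holomorphic projection, is a scalar multiple of the iterated Maass--Shimura operator, and the scalar is exactly $\Omega_{k,s}(\tfrac{1}{2}-t)\,p_{1/2-t}(t+g)/d_{1/2-t}(t+g)$. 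Third, introduce the twist by $\chi$: the twisted Eisenstein series is by \cite[(2.18)]{BS} a finite weighted sum of right unipotent translates of the untwisted one with weights $\chi\bmod N_1Rp^n$; when plugged into the doubling integral this precisely replaces each unramified Euler factor $D_q(q^{-s})^{-1}$ by $D_q(\chi(q)q^{-s})^{-1}$ and kills the local factors at primes dividing $N_1Rp$, producing $\Ll^{(N_1)}(1-t,\mathrm{St}(f),\chi^{-1})$.

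For the second identity, the central input is the relation \cite[(7.13')]{BS} already recalled above, expressing $(\F^{t+1})^{(\chi)}$ as the image of $(\F^{t+1})^{(\chi_1\chi')}$ under the Hecke-like sum $\sum_{i=0}^g(-1)^ip^{i(i-1)/2}p^{-ig}\sum_j\bigl(\begin{smallmatrix}1 & S(g_{i,j}) \\ 0 & 1\end{smallmatrix}\bigr)$. Substituting this into the Petersson pairing against $f\vert W$, the combination acts on the ordinary form $f$ through its Satake parameters at $p$; a direct Satake-theoretic computation — essentially the evaluation of the Hecke polynomial for the standard representation of $\GSp_{2g}$ on a $U_p$-ordinary eigenform — identifies the action with $p^{g(g+1)/2}\prod_{i=1}^g\bigl(1-\beta_i^{-1}(f_x)\chi^{-1}(p)p^{-t}\bigr)$, with the ordering of the $\beta_i$ dictated by the regular submodule $D$ fixed in Section~\ref{padicLfunctions}. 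The claimed identity is then immediate.

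The main obstacle is not the overall structure, which is essentially already present in B\"ocherer--Schmidt, but the careful bookkeeping of constants, signs and levels: one must verify that the modification recorded in the proof of Theorem~\ref{FourierEis} (the suppression of the extra Fricke factor $\vert\bigl(\begin{smallmatrix}1 & 0 \\ 0 & N^2S\end{smallmatrix}\bigr)$) produces precisely the stated powers of $R^2N^2p^{2n}$ and $N_1Rp^n$, and that the Satake-theoretic computation in the improved case matches the prescribed Euler product $\prod_i(1-\beta_i^{-1}\chi^{-1}(p)p^{-t})$ for the specific ordering of Satake parameters determined by our choice of $D$; this last identification is the genuinely new ingredient beyond \cite{BS}, and is what will ultimately allow the improved $p$-adic $L$-function to capture the Greenberg--Stevens derivative in the trivial-zero setting.
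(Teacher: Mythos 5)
Your proposal follows essentially the same route as the paper: the paper's own proof is a direct citation of \cite[Theorem 3.1, Proposition 7.1]{BS} (identifying $M=R^2N^2p^{2n}$, $N=N_1Rp^n$ in their notation, noting that $\tfrac{d_{1/2-t}(t+g)}{p_{1/2-t}(t+g)}\h'$ is the holomorphic projection of their Eisenstein series and that $(-1)^{gk}$ comes from the Atkin--Lehner involution), with the second identity quoted from \cite[\S 3 Appendix]{BS}, which is exactly the pullback-plus-twisting-method bookkeeping you reconstruct. Your unpacking of the twisting relation (7.13') into a Satake-theoretic evaluation of the Euler factor $\prod_i(1-\beta_i^{-1}\chi^{-1}(p)p^{-t})$ is just an explicit rendering of what that appendix of B\"ocherer--Schmidt already contains, so there is no substantive divergence.
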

\begin{proof}
With the notation of \cite[Theorem 3.1]{BS} we have $M=R^2 N^2 p^{2n}$ and $N=N_1Rp^n$. We  have that $\frac{d_{\frac{1}{2}-t}(t+g)}{p_{\frac{1}{2}-t}(t+g)}\h'$ is the holomorphic projection of the Eisenstein series of  \cite[Theorem 3.1]{BS} (see \cite[(1.30),(2.1),(2.25)]{BS}) and the factor ${(-1)}^{gk}$ comes from the Atkin--Lehner involution.

The second formula is \cite[\S 3 Appendix]{BS}.
\end{proof}
\begin{rem}
This is essentially a reformulation of the classical pull-back formula of Garrett and Piateski-Shapiro--Rallis \cite[Part A]{GPSR}. 
\end{rem}
If $U_N f \neq 0$ we can then take $N_1=1$ in which case we do not remove any extra factor.
\section{Families of Eisenstein series}
\subsection{Families of ordinary Siegel forms}
We recall Hida's theory for parallel weight Siegel forms, following \cite[\S 4]{PilHida}. 
For $i=0,\ldots,g$ we define the Hecke operators:
$$U_{p,i}=\left[\G\left( \begin{array}{cccc}
                           1_i &  & &  \\
                           & p^{-1}1_{g-i} & &  \\
                            &  &  1_{i}&   \\
                            &  & & p1_{g-i} 
                          \end{array}
\right)\G\right],$$ and $U_p={p^{\frac{gk-g(g+1)}{2}}}\left[\G\left( \begin{array}{cc}
                           1_g &  \\
                           & p1_{g} 
                          \end{array}
\right)\G\right]$. Note that $U_p^2=p^{{gk-g(g+1)}}U_{p,g}$ This normalization is optimal if one wants the $U_p$ operator to preserve both $p$-integrality and non-vanishing modulo $p$. 
 We let ${V}_{l,\infty}$ be the set of $p$-adic modular forms of tame level $N$ with $\Z/p^l\Z$-coefficients (of any weight); it is equipped with an action of $\mr{GL}_g(\Z_p)$. We define $V_{\infty}=(\varinjlim_l {V}_{l,\infty})^{\mr{SL}_g(\Z_p)}$, which should be thought as a space of $p$-adic Siegel forms with $\Q_p/\Z_p$ coefficients. 
 We have a $\m U_p$-operator acting on $V_{\infty}$ and we define $\m V=\mr{Hom}_{\Z_p}( e_{\mr{GL}_g} V_{\infty}, \Q_p/\Z_p)$ for $e_{\mr{GL}_g}=\lim_n \m U_p^{n!}$. This is the ordinary projector ``adapted to parallel weight'' of \cite[D\'efinition 5.1]{PilHida}.
 
 We let $\Lambda=\Z_p[[\Z_p^{\times}]]$, it has a natural $\Z_p$-linear action on $V_{\infty}$:
 $z \in \Z_p^{\times}$ acts via any matrix $g \in \mr{GL}_g(\Z_p)$ of determinant $z$. We shall write $z \in \Z_p^{\times}$ as $\omega(z)\lla z \rra$, being $\omega$ the Teichm\"uller character. 
 
  Hence we can consider $V_{\infty}$ and $\m V$ as $\Lambda$-modules and we define 
 \begin{align*}
 M_{\infty} := & \mr{Hom}_{\Lambda}(\mr{Hom}_{\Z_p}(  V_{\infty}, \Q_p/\Z_p),\Lambda),\\
 \m M := & \mr{Hom}_{\Lambda}(\m V, \Lambda).
 \end{align*}
 
 The module $\m M$ is the (free of finite type) module of $\Lambda$-adic ordinary forms. As shown in \cite{LiuSFL}, each element of $M_{\infty}$ or $\m M$ admits a $q$-expansion with $\Lambda$ coefficients; indeed, each element in $\m V$ has a $q$-expansion in $\Q_p/\Z_p$ and each $T$ defines a map $\tilde{\lambda}_T:\m V \rightarrow \Q_p/\Z_p$. Applying duality twice to $\tilde{\lambda}_T$ we obtain a map $\lambda_T:\m M \rightarrow \Lambda$ which defines the $q^T$-th coefficient of an element of $\m M$ and commute with specialization at any  point of $\Lambda$. \\
 
We have an action of the Hecke algebra $\m T$ on $ \m M$ which makes $\m M$ a finite module over $\m T$. Moreover $\m T$ is finite over $\Lambda$. We shall call an irreducible component $\m F$ of  $\mr{Spec}(\m T)$ a Hida family of Siegel eigensystem. To each $\m F$ we can associate the corresponding eigenspace on $\m M$ consisting of $\mc O (\m F)$-adic $q$-expansions.\\

 The same theory can be developed for Siegel form with Nebentypus outside $p$ \cite[Th\'eor\`eme 7.2]{PilHida}. In what follows we shall, if not clear from the context,  emphasize the dependence from the tame level $N$ by writing $\m M_N$ and $\m T_N$.\\

We conclude this section recalling this important theorem, originally due to Hida, which in this form is due Pilloni:
\begin{theo}
Let $k$ be an integer, $i$ an integer, $0 \leq i < p-1$ and $P_k$ the kernel of the map $[i,k]:\Lambda \rightarrow \Z_p$ sending $z \in \Z_p^{\times}$ to $\omega^i(z) \lla z \rra^k \in \Z_p$. For $k$ big enough we have 
\begin{align*}
\m M \otimes \Lambda/P_k \cong M_k^{(g)}(\mr{Iw}_N,\chi \omega^{-i})^{\mr{ord}},
\end{align*}
where the subscript ord refers to ordinarity for $U_p$ and $\mr{Iw}_N$ the intersection of $\G_0(N)$ with the $p$-Iwahori subgroup of $\mr{GSp}_{2g}$ of matrices which are upper triangular modulo $p$.
\end{theo}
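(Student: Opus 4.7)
The plan is to prove the theorem along the lines of Hida's control theorem for $\mr{GL}_2$, adapted to the Siegel setting in the geometric manner carried out by Pilloni in the reference. The natural specialization map $\m M \otimes \Lambda/P_k \to M_k^{(g)}(\mr{Iw}_N,\chi\omega^{-i})^{\mr{ord}}$ is furnished by the $q$-expansion principle: each element $\boldF$ of $\m M$ has $q$-expansion coefficients $\lambda_T(\boldF) \in \Lambda$, and reducing these modulo $P_k$ produces a $q$-expansion with $\Z_p$-coefficients that, by the classicality statement below, comes from a unique ordinary form of weight $k$ and level $\mr{Iw}_N$. The goal is to show this map is an isomorphism; I would do so by identifying both sides with the $\Z_p$-dual of a common object.

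First, I would establish the dual ``vertical control'' assertion for the discrete module $e_{\mr{GL}_g} V_\infty$: for $k$ sufficiently large,
\[
(e_{\mr{GL}_g} V_\infty)[P_k] \cong M_k^{(g)}(\mr{Iw}_N,\chi\omega^{-i})^{\mr{ord}} \otimes \Q_p/\Z_p .
\]
The underlying principle is that the $\Z_p^{\times}$-action on $V_\infty$ via diagonal matrices of determinant $z$ translates specialization at $P_k$ into specialization at weight $k$ with nebentypus $\chi\omega^{-i}$; and that every ordinary $p$-adic Siegel form of sufficiently large integer weight is classical and of $p$-Iwahori level. This classicality is the genuine analytic content and is the direct generalization of Hida's original theorem.

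Second, I would transfer the statement to $\m M$ using Pontryagin duality and the $\Lambda$-freeness of $\m M$. Applying $\mr{Hom}_{\Z_p}(-,\Q_p/\Z_p)$ to $0 \to (e V_\infty)[P_k] \to e V_\infty \xrightarrow{P_k} e V_\infty$ gives $\m V/P_k\m V \cong ((e V_\infty)[P_k])^{\vee}$, which by the previous step is a finite free $\Z_p$-module isomorphic to $M_k^{(g)}(\mr{Iw}_N,\chi\omega^{-i})^{\mr{ord}}$. Granted that $\m V$ (equivalently $\m M$) is $\Lambda$-free of finite rank, one has
\[
\m M \otimes \Lambda/P_k = \mr{Hom}_\Lambda(\m V, \Lambda)\otimes \Lambda/P_k \cong \mr{Hom}_{\Z_p}(\m V/P_k\m V,\Z_p),
\]
and double duality for free $\Z_p$-modules, combined with the Hecke-equivariant pairing coming from the $q$-expansion, identifies this with $M_k^{(g)}(\mr{Iw}_N,\chi\omega^{-i})^{\mr{ord}}$ in a manner compatible with the specialization map described in the first paragraph.

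The main obstacle is the $\Lambda$-freeness of $\m M$, equivalently the $\Lambda$-cofreeness of $e_{\mr{GL}_g} V_\infty$; in genus one this is handled via the Eichler--Shimura isomorphism and étale cohomology of modular curves, but for $g>1$ no such direct tool is available. Pilloni's approach is geometric, working on a toroidal compactification of the ordinary Igusa tower over the Siegel variety: one invokes Koecher's principle, establishes the vanishing of higher coherent cohomology for the relevant automorphic sheaves in the ordinary locus, and extracts from the resulting cohomology-free description both the cofreeness and the classicality of ordinary $p$-adic forms of large integer weight. Once these geometric inputs are in place, the duality bookkeeping in the previous paragraphs is essentially formal.
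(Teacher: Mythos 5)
The paper gives no proof of this statement: it is recalled verbatim from Pilloni's work on Hida theory for $\mr{GSp}_{2g}$ (the reference \cite{PilHida}), so there is nothing internal to compare against. Your outline faithfully reproduces the strategy of that cited source --- the $q$-expansion specialization map, Pontryagin duality turning $(e_{\mr{GL}_g}V_\infty)[P_k]$ into $\m V/P_k\m V$, and the reduction of the whole control theorem to the two genuinely geometric inputs, namely classicality of ordinary $p$-adic forms of large parallel weight at Iwahori level and the $\Lambda$-freeness of $\m V$, both of which are Pilloni's theorems proved on the Igusa tower --- so it is consistent with the proof the paper is implicitly invoking, with the caveat that those two inputs are asserted rather than established.
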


\begin{rem}
Conjecturally, the optimal $k$ for the theorem to hold is $g+1$ and this is proven for $g=1$. For $g=2$, the best bound is $4$ \cite{PilDuke}.
\end{rem}

We want to point out the Hecke action on the space of Siegel forms in \cite{PilHida} and in this paper differ by a character. Namely, using the complex uniformization of the Siegel variety, the action in {\it loc. cit.} would be given by 
\begin{align*}
F\vert_k M (Z) =F(M(Z)){\mr{det}(CZ+D)}^{-k}\mr{det}(M)^{k}.
\end{align*}

\subsection{The Eisenstein family}
We now  use the above defined Eisenstein series to give examples of $p$-adic families. More precisely, we shall construct a two-variable measure (which will be used for the two variables $p$-adic $L$-function) and a one variable measure (which will be used to construct the improved one variable $L$-function).\\
Let us fix $\chi=\chi_1\chi'\eps_1$ as before. We suppose  $\chi$ even. We recall the Kubota-Leopoldt $p$-adic $L$-function:
\begin{theo}
 Let $\eta$ be a even Dirichlet character. There exists a $p$-adic $L$-function $L_p(\kappa,\eta) $ satisfying the following interpolation formula for any integer $t\geq 1$ and finite-order character $\eps$ of $1+p\Z_p$:
\begin{align*}
 L_p(\eps(u)[t],\eta) = (1-(\eps\omega^{-t}\eta)_0(p))L(1-t,\eps\omega^{-t}\eta),
\end{align*}
where $\eta_0$ stands for the primitive character associated with $\eta$. If $\eta$ is not trivial then $ L_p([t],\eta)$ is holomorphic. Otherwise, it has a simple pole at $[0]$.
\end{theo}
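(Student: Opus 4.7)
The plan is to construct, following Iwasawa and Mazur, a $\Lambda = \Z_p[[\Z_p^{\times}]]$-valued (pseudo-)measure on $\Z_p^{\times}$ whose moments against characters of the form $\eps(u)[t]$ recover the prescribed values, then to verify the interpolation formula via the classical identity $L(1-t,\chi) = -B_{t,\chi}/t$. This approach is thematically parallel to the main construction of the paper, which also builds $p$-adic $L$-functions by interpolating values (here: Fourier coefficients of Eisenstein series) via Kummer-style congruences.

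First I would regularize. Fix an auxiliary integer $c>1$ coprime to $fp$, where $f$ is the conductor of $\eta$. For each $n\geq 1$, consider the Stickelberger-type distribution on $(\Z/fp^n\Z)^{\times}$ defined by
$$a \longmapsto \eta(a)\left( B_1\!\left(\left\{\tfrac{a}{fp^n}\right\}\right) - c\, B_1\!\left(\left\{\tfrac{c^{-1}a}{fp^n}\right\}\right) \right).$$
Compatibility under the projections $n \leadsto n+1$ is exactly the distribution relation for $B_1$, and the $c$-regularization forces values to be $p$-integral (the unregularized distribution is unbounded, reflecting the pole of $\zeta$). Taking the projective limit and pushing forward along $(\Z/fp^n\Z)^{\times} \to \Z_p^{\times}$ produces a measure $\mu_{c,\eta} \in \Lambda$.

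Next I would compute the moments against $\eps(u)[t]$ for $t\geq 1$. Direct manipulation of Bernoulli sums together with $L(1-t,\chi) = -B_{t,\chi}/t$ gives
$$\int_{\Z_p^{\times}} \eps(x)x^t\, d\mu_{c,\eta}(x) \;=\; \bigl(c^{1-t}-(\eta\eps\omega^{-t})(c)\bigr)\bigl(1-(\eta\eps\omega^{-t})_0(p)\bigr)\, L\bigl(1-t,\, \eta\eps\omega^{-t}\bigr).$$
The Euler-type factor $(1-(\eta\eps\omega^{-t})_0(p))$ appears because restricting the distribution from $\Z_p$ to $\Z_p^{\times}$ discards the part supported on $p\Z_p$, which subtracts the $p$-th Euler factor from the classical $L$-value.

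Finally, I would divide by $u_c := 1 - c\,[c] \in \Lambda$, which specializes at $\eps(u)[t]$ to exactly $c^{1-t}-(\eps\omega^{-t})(c)$. For $\eta$ non-trivial, one shows that $u_c$ genuinely divides $\mu_{c,\eta}$ in $\Lambda$ and that the quotient is independent of the auxiliary $c$; this is the heart of the Iwasawa--Mazur construction and would be the main technical obstacle (verified by comparing two choices $c,c'$ and using that $u_c,u_{c'}$ generate the unit ideal away from the augmentation locus). The resulting $L_p(\kappa,\eta) \in \Lambda$ is the desired measure. For $\eta = 1$, the division takes place in $\mathrm{Frac}(\Lambda)$ and produces a simple pole precisely at $\kappa = [0]$, corresponding under $s = 1-t$ to the familiar pole of $\zeta(s)$ at $s=1$.
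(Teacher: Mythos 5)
The paper does not prove this statement at all: it is introduced with ``We recall the Kubota--Leopoldt $p$-adic $L$-function'' and used as a black box (its only role in the paper is that $L_p(\kappa',\sigma_{-\det(2I)}\phi\chi)$ enters the interpolated Fourier coefficients, and its pole at $[0]$ explains a possible denominator in the cyclotomic variable). Your proposal reconstructs the standard Iwasawa--Mazur--Stickelberger proof, and the architecture is the right one: the $c$-regularized Bernoulli distribution on $(\Z/fp^n\Z)^{\times}$, boundedness from the regularization, the distribution relation for $B_1$, removal of the Euler factor at $p$ by restricting to $\Z_p^{\times}$, and division by a regularizing element of $\Lambda$ with the pole for trivial $\eta$ arising because that element vanishes at one point where the measure does not. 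You also correctly isolate the genuine technical content (divisibility by $u_c$ for $\eta\neq 1$ and independence of $c$). This is thematically consonant with the paper, which likewise builds its measures out of Kummer-type congruences on $q$-expansion coefficients.

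There is, however, a concrete normalization inconsistency in your last step that you should repair before the argument closes. The element $u_c=1-c[c]$ specializes at the character $z\mapsto\eps(z)z^{t}$ to $1-\eps(c)c^{t+1}$, not to $c^{1-t}-(\eps\omega^{-t})(c)$ as claimed; and it vanishes at $\kappa=[-1]$ (i.e.\ $\kappa(c)=c^{-1}$), not at $\kappa=[0]$, so as written your quotient would acquire its pole at the wrong point of $\mr{Spec}(\Lambda)$ and the interpolation formula would carry an unwanted extra factor. The fix is standard but must be done consistently: either integrate $\eps(x)x^{t-1}$ (matching the classical moment $\int_{\Z_p^{\times}}x^{t-1}\,d\mu_c=(1-c^{t})(1-p^{t-1})\zeta(1-t)$ and its twisted analogues) and divide by $1-[c]$, which does specialize at $[t]$ to $1-c^{t}$ and vanishes exactly at $[0]$; or keep the moment of $\eps(x)x^{t}$ but shift the variable $\kappa$ accordingly. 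Once the exponents in the moment computation, in $u_c$, and in the paper's convention $L_p(\eps(u)[t],\eta)=(1-(\eps\omega^{-t}\eta)_0(p))L(1-t,\eps\omega^{-t}\eta)$ are aligned, the rest of your argument (integrality, $c$-independence via the fact that $u_c$ and $u_{c'}$ have no common zero away from the augmentation point, and the simple pole for $\eta=1$) goes through and yields exactly the stated theorem.
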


We can consequently define $p$-adic analytic functions interpolating the Fourier coefficients of the Eisenstein series defined in the previous section; for any $z$ in  $\Z_p^*$, we define $l_z=\frac{\log_p(z)}{\log_p(u)}$. For $T_1$ and $T_4$ two positive semi-definite matrices we define the function:
\begin{align*}
 a_{T_1,T_4,L}(\kappa,\kappa')= & \left( \sum_I \kappa{\kappa'}^{-1}[-g](u_{\mr{det}(2 T_2)}) {\chi'}^{-1}(\mr{det}(2T_2)) \times \right. \\
 & \left. \times \sum_{G \in \mathrm{GL}_{2g}(\Z) \setminus \mathbf{D}(I)}(\phi\chi_1\chi')^2(\mathrm{det}(G)){\vert \mathrm{det}(G) \vert}^{-1}{\kappa'}^2(u^{l_{\vert \mathrm{det}(G) \vert}}) \right. \\
& \left. \;\:\;\: L_p(\kappa',\sigma_{-\mathrm{det}(2I)}\phi\chi)\prod_{q \mid\mathrm{det}(2G^{-t}IG^{-1}) } B_q\left(\phi(q)\kappa'(u^{l_q})q^{-g-1},G^{-t}IG^{-1}\right) \right)
\end{align*}
for $I$ as in Theorem \ref{FourierEis}. (Here $\kappa$ is the weight variable and $\kappa'$ is the variable for $t$. Recall that $s=k-g-t$.)\\
Note that the terms for $I$ with $(\mr{det}(T_2),p) \neq 1$ are always $0$.\\
We recall that if $p^{-j}T_1$ and $p^{-j}T_4$ are half integral we have \cite[(1.21, 1.34)]{BS}:
\begin{align*}
 4^{gs}\bfrak_{g,t+1}^{s}(I) \equiv {(-1)}^{s}2^{gs}  c^{s}_{g,t+1} {\mr{det}(2T_2)}^{s} \bmod p^{j},
\end{align*}
for $s=k-t-g$. Consequently, if we define 
\begin{align*}
 \h_L(\kappa,\kappa') = \sum_{T_1 \geq 0} \sum_{T_4 \geq 0 }  a_{T_1,T_4,L}(\kappa,\kappa') q_1^{T_1}q^{T_4}_2
\end{align*}
 from the congruence (\ref{congru}) we have:
\begin{align}\label{congru2}
{(-1)}^{s} 2^{s} c^{s}_{g,t+g} A  \h_{p^j}([k],\eps[t]) \equiv  \h'_{p^j,t,\chi\eps\omega^{-t}}(z,w) \bmod p^j,
\end{align}
with 
\begin{align*}
 A = A(k,t,\eps)=  {(Rp^n)}^{\frac{g(g-1)}{2}} B_{2g}(t) {(2 \pi i )}^{sg}{G\left({\chi\eps\omega^{-t}}\right)}^g.
\end{align*}

We have the following lemma:
\begin{lemma}\label{lemmaprojinfty}
 There exists a projector \begin{align*}
e_{g,g}^{\mr{ord}} : {M}_{\infty}\otimes{M}_{\infty} \rightarrow \m{M} \otimes\m{M}
                          \end{align*}
                          such that, for $i$, $j$ big enough the following holds:
                          \begin{align*}
                          {(U^{\otimes^2}_{p})}^{-2i}e_{g,g}^{\mr{ord}}{(U^{\otimes^2}_{p})}^{2i} F(\kappa)= {(U^{\otimes^2}_{p})}^{-2j}e_{g,g}^{\mr{ord}}{(U^{\otimes^2}_{p})}^{2j} F(\kappa).
                          \end{align*}
\end{lemma}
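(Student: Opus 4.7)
The plan is to construct $e_{g,g}^{\mathrm{ord}}$ as the tensor square $e_{\mathrm{GL}_g} \widehat{\otimes} e_{\mathrm{GL}_g}$ of the one-variable Hida--Pilloni ordinary projector, and then to deduce the stabilization identity from the fact that this projector commutes with $U_p^{\otimes^2}$ while $U_p^{\otimes^2}$ acts invertibly on its image.

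First I would recall from the discussion preceding the lemma that $e_{\mathrm{GL}_g} = \lim_n \mathbb U_p^{n!}$ converges $p$-adically on $V_{\infty}$ to an idempotent whose image is the ordinary subspace $e_{\mathrm{GL}_g} V_{\infty}$, and on which $\mathbb U_p$ acts as a unit. Applying $\mathrm{Hom}_{\mathbb Z_p}(-,\mathbb Q_p/\mathbb Z_p)$ and then $\mathrm{Hom}_{\Lambda}(-,\Lambda)$ transports this to an idempotent on $M_{\infty}$ (still denoted $e_{\mathrm{GL}_g}$) whose image is $\mathcal M$, and on which the dual $U_p$ is again invertible.

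Next I would set
\begin{equation*}
e_{g,g}^{\mathrm{ord}} := e_{\mathrm{GL}_g} \widehat{\otimes} e_{\mathrm{GL}_g} : M_{\infty} \widehat{\otimes} M_{\infty} \longrightarrow \mathcal M \widehat{\otimes} \mathcal M,
\end{equation*}
which is a well-defined $\Lambda$-linear idempotent as a composition of two commuting idempotents. Since $e_{\mathrm{GL}_g}$ commutes with $U_p$ on each factor, the operator $e_{g,g}^{\mathrm{ord}}$ commutes with $U_p^{\otimes^2}$; since $U_p^{\otimes^2}$ is invertible on the image $\mathcal M \widehat{\otimes} \mathcal M$, the composite
\begin{equation*}
(U_p^{\otimes^2})^{-2i}\,e_{g,g}^{\mathrm{ord}}\,(U_p^{\otimes^2})^{2i} F(\kappa) \;=\; (U_p^{\otimes^2})^{-2i}(U_p^{\otimes^2})^{2i}\,e_{g,g}^{\mathrm{ord}} F(\kappa) \;=\; e_{g,g}^{\mathrm{ord}} F(\kappa)
\end{equation*}
is unambiguously defined as soon as $i$ is large enough that $(U_p^{\otimes^2})^{-2i}$ is applied only to the ordinary output. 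The equality of the $i$ and $j$ expressions then follows at once.

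The main technical obstacle is to ensure convergence and commutation at the infinite level rather than on individual finite quotients. The point is that at each truncation $V_{l,\infty}$ the sequence $\mathbb U_p^{n!}$ stabilizes in finitely many steps (the classical key finiteness lemma of Hida), and this stabilization is compatible with the two successive dualities used to define $M_{\infty}$ and with the completed tensor product. Finite generation of $\mathcal M$ as a $\Lambda$-module \cite{PilHida} then guarantees both that $e_{\mathrm{GL}_g} \widehat{\otimes} e_{\mathrm{GL}_g}$ is a projector of the announced form and that $U_p^{\otimes^2}$ is invertible on its image; once these two structural facts are in place, the lemma is a formal consequence of the commutation of $e_{g,g}^{\mathrm{ord}}$ with $U_p^{\otimes^2}$.
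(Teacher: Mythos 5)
Your proposal is correct and follows essentially the same route as the paper: the author also defines $e_{g,g}^{\mr{ord}}:=e_{\mr{GL}_g}^{\otimes^2}$ and deduces the identity immediately from the commutation of $U_p$ with $e_{\mr{GL}_g}$ together with the invertibility of $U_p$ on the ordinary part. Your additional remarks on convergence of $\lim_n \mathbb{U}_p^{n!}$ and compatibility with the dualities only flesh out what the paper leaves implicit.
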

\begin{proof}
We define $e_{g,g}^{\mr{ord}}:= e_{\mr{GL}_g}^{\otimes^2}$. The rest is an immediate consequence of the fact that $U_p$ and $e_{\mr{GL}_g}$ commutes and that $U_p$ is invertible on the ordinary part.
\end{proof}

We shall now construct  the improved Eisenstein family.  We let  $\chi=\chi_1\chi'$ be a Dirichlet character modulo $N_1R$ and (as before) we see $\phi$ as a Dirichlet character modulo $Np$. We define
\begin{align*}
 a^*_{T_1,T_4}(\kappa)= & \left( \sum_I {(\chi'\chi_1)}^{-1}(2T_2) \sum_{G \in \mathrm{GL}_2(\Z) \setminus \mathbf{D}(I)}(\phi\chi'\chi_1)^2(\mathrm{det}(G)){\vert \mathrm{det}(G) \vert}^{-1-2g}{\kappa}(u^{l_{2\vert \mathrm{det}(G) \vert}}) \right. \times \\
& \left. \;\:\;\: \times L_p(\kappa[-g],\sigma_{-\mathrm{det}(2I)}\phi\chi'\chi_1)\prod_{q \mid\mathrm{det}(2G^{-t}IG^{-1}) } B_q\left(\phi\chi'\chi_1(q)\kappa(u^{l_q})q^{-1},G^{-t}IG^{-1}\right) \right).
\end{align*}
Note that the terms for $I$ with $(\mr{det}(T_2),p) \neq 1$ are not necessarily $0$ contrary to the previous construction. We now construct another $p$-adic family of Eisenstein series:
\begin{align*}
  \h^*(\kappa)=\sum_{T_1 \geq 0} \sum_{T_4 \geq 0 }  a^*_{T_1,T_4}(\kappa[1-k_0]) q_1^{T_1}q^{T_2}_2.
\end{align*}
We can construct a two-variable family of Eisenstein series, generalizing \cite[Theorem 8.6]{BS}:
\begin{prop}
 We have a $p$-adic measure $\h(\kappa,\kappa') $ on $\Z_p^{\times} \times \Z_p^{\times}$  with values in $ \m{M} \otimes\m{M}$ such that for all $k,t,\eps$ arithmetic, with $\eps$ of conductor $p^n$, we have
\begin{align*}
 \h([k],\eps[t]) = \frac{1}{{(-1)}^{s} c^{s}_{g,t+g} A }  e_{g,g}^{\mr{ord}}   \h'_{1,t,\chi\eps\omega^{-t}}(z,w),
\end{align*}
for any $i\geq 2n$.

We have a one variable measure $\h^*(\kappa)$ on $\Z_p^{\times}$ with values in $\m{M} \otimes\m{M}$ such that
\begin{align*}
 \h^*([k]) = {A^*}^{-1}e_{g,g}^{\mr{ord}}{\h'}^*_{1,\chi_1\chi'}(z,w)\vert_{u=\frac{1}{2}},
\end{align*}
for
\begin{align*}
 A^{*}=  A^*(k)=  {R}^{\frac{g(g-1)}{2}} B_{2g}(k-g) {(2 \pi i )}^{sg}{G\left({\chi'}\right)}^g. 
\end{align*}
\end{prop}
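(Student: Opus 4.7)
The plan is to build the measure $\h$ through its Fourier expansion, and to derive the interpolation property at arithmetic points from the combination of Theorem \ref{FourierEis}, the explicit congruence \eqref{congru2}, and Lemma \ref{lemmaprojinfty}.

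First, I would observe that each Fourier coefficient $a_{T_1,T_4,L}(\kappa,\kappa')$ is, by its very definition, an element of the two-variable Iwasawa algebra $\Lambda\hat\otimes\Lambda$: it is a finite sum of products of the Kubota--Leopoldt measure $L_p(\kappa',\cdot)$, of the polynomial expressions $B_q(\cdots,G^{-t}IG^{-1})$ whose coefficients vary continuously in $\kappa'$, and of characters of $\kappa,\kappa'$ evaluated at explicit $p$-adic units. Hence $\h_L(\kappa,\kappa')$ is a formal $q$-expansion with coefficients in $\Lambda\hat\otimes\Lambda$. At an arithmetic point $(\kappa,\kappa')=([k],\eps[t])$ with $\eps$ of conductor $p^n$, the interpolation property of $L_p$ identifies each $a_{T_1,T_4,p^j}([k],\eps[t])$ with the explicit expression appearing in Theorem \ref{FourierEis} for $\h'_{p^j,t,\chi\eps\omega^{-t}}$, \emph{except} that the factor $\bfrak_{g,t+g}^s(I)$ has been replaced by $((-1)^s 2^s c_{g,t+g}^s A)^{-1} \det(2T_2)^s$. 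The congruence \eqref{congru2} (itself a consequence of \eqref{congru} applied with $L=p^j$) says that this replacement costs only an error divisible by $p^j$.

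Next I would promote this mod-$p^j$ agreement of formal $q$-expansions to an honest identity of ordinary $\Lambda$-adic forms by composing with $e_{g,g}^{\mr{ord}}$. Concretely, I set
$$\h(\kappa,\kappa'):=(U_p^{\otimes 2})^{-2j}\,e_{g,g}^{\mr{ord}}\,(U_p^{\otimes 2})^{2j}\,\h_1(\kappa,\kappa')$$
for $j$ larger than the $p$-part of the conductor of $\kappa'$; by Lemma \ref{lemmaprojinfty} the result is independent of such $j$ and, together with the $q$-expansion principle for $\Lambda$-adic ordinary forms recalled from \cite{LiuSFL}, defines an element of $(\m{M}\otimes\m{M})\hat\otimes\Lambda\hat\otimes\Lambda$. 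Combining the identity $\h'_{p^j,t,\chi}=\h'_{1,t,\chi}|^{z}U_{p^{2j}}|^{w}U_{p^{2j}}$ with the invertibility of $U_p$ on the ordinary part, the mod-$p^j$ Fourier-coefficient agreement becomes, in the limit, an exact equality
$$\h([k],\eps[t])=\frac{1}{(-1)^s c_{g,t+g}^{s}A}\,e_{g,g}^{\mr{ord}}\,\h'_{1,t,\chi\eps\omega^{-t}}(z,w),$$
as required.

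For the improved one-variable measure $\h^*(\kappa)$, the argument is analogous but considerably simpler: since $\chi=\chi_1\chi'$ has level prime to $p$, the twisting method of \cite[(2.18)]{BS} is no longer needed, so one works directly at $L=1$; the Fourier coefficients $a^*_{T_1,T_4}(\kappa)$ lie manifestly in $\Lambda$, and a single application of $e_{g,g}^{\mr{ord}}$ to the formal $q$-expansion yields the desired measure with the stated interpolation formula. The main obstacle, therefore, lies entirely in the two-variable case, and is the interaction between the $U_{p^{2j}}$-shift produced by the twisting method and the ordinary projector: one must show that conjugating $e_{g,g}^{\mr{ord}}$ by $(U_p^{\otimes 2})^{2j}$ stabilizes for large $j$ and commutes in the limit with the mod-$p^j$ Fourier-theoretic congruences. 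This is precisely what Lemma \ref{lemmaprojinfty} provides, and it is the linchpin of the construction.
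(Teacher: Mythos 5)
Your proposal is correct and follows essentially the same route as the paper: define the measure by applying $e_{g,g}^{\mr{ord}}$ to the formal $q$-expansion $\h_1(\kappa,\kappa')$ (your conjugated operator $(U_p^{\otimes 2})^{-2j}e_{g,g}^{\mr{ord}}(U_p^{\otimes 2})^{2j}$ coincides with this by Lemma \ref{lemmaprojinfty}), then use the identity $(U_p^{\otimes 2})^{2j}\h'_{1,\chi}=\h'_{p^j,\chi}$ together with the congruence \eqref{congru2} and pass to the limit in $j$. The only point worth stating more carefully is that mere invertibility of $U_p$ on the ordinary part is not quite enough to propagate the mod-$p^j$ congruence through $(U_p^{\otimes 2})^{-2j}$ — one needs, as the paper notes, that $U_p$ acts with norm one on the ordinary part so that its inverse preserves the $q$-expansion norm.
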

\begin{proof}
 Note that from its own definition we have ${(U^{\otimes^2}_{p})}^{2j}\h'_{1,\chi}(z,w) =\h'_{p^j,\chi}(z,w)$. We define 
\begin{align}\label{EisMeasure}
 \h(\kappa,\kappa')= e_{g,g}^{\mr{ord}}\h_{1}(\kappa,\kappa').
\end{align}
The key congruence (\ref{congru2})  give us %
\begin{align*}
A {(-1)}^s 4^{s} c^{s}_{g,t+g} e_{g,g}^{\mr{ord}} \h_{1}([k],\eps[t]) = & A {(-1)}^s 4^{s} c^{s}_{g,t+g} {U^{\otimes^2}_{p}}^{-2j} e_{g,g}^{\mr{ord}} \h_{p^j}([k],\eps[t])  \\
\equiv & 4^{s}{U^{\otimes^2}_{p}}^{-2j}e_{g,g}^{\mr{ord}} \h'_{p^j,t,\chi\eps\omega^{-t}}(z,w) \\
  = & 4^{s}e_{g,g}^{\mr{ord}} \h'_{1,t,\chi\eps\omega^{-t}}(z,w) 
\end{align*}
as $U_p$ acts on the ordinary part with norm $1$ hence it preserves the $q$-expansion norm (which induces the sup-norm on the ordinary locus). Taking the limit over $j$ gives the desired result. Similarly we define
\begin{align}\label{EisMeasure*}
 \h^*(\kappa)= e_{g,g}^{\mr{ord}}\h^*(\kappa,\kappa').
\end{align}

\end{proof}
We want to explain briefly why the construction above works in the ordinary setting and not in the finite slope one.\\
 It is slightly complicated to explicitly  calculate the polynomial $\bfrak_{t+1}^{s}(I)$ and in particular to show that they vary $p$-adically with $s$. But we know that $\mathring{\Dfrak}_{g,k-2s}^{s}$ is an homogeneous polynomial in $\frac{\partial}{\partial z_{i,j}}$ of degree $gs$.  Consider the embedding $\Hbb_g \times \Hbb_g \hookrightarrow \Hbb_{2g}$; we have a single monomial of $\mathring{\Dfrak_{g,k}^{s}}$ which does belong to the normal bundle, namely $ c^{s}_{g,t+1} \partial_2^{s}$, and this is the term that does not involve a partial derivative $\frac{\partial}{\partial z_{i,j}}$ in a variable $z_{i,j}$  of  $\Hbb_g^2$. Consequently, in $\bfrak_{t+1}^{s}(I)$ there is a single monomial without $T_1$ and $T_4$.\\
 When  the entries on the diagonal of $I$ are divisible by $p^i$, $\bfrak_{t+1}^{s}(I)$ reduces to $  c^{s}_{t+1}$ modulo $p^i$. Hence applying $U^{\otimes^2}_{p}$ many times we approximate $\mathring{\Dfrak}_{g,k}^{s}$ by ${\partial}^s$ (multiplied by a constant). The more times we apply $U^{\otimes^2}_{p}$,  the better we can approximate $p$-adically $\mathring{\Dfrak_l^{s}}$ by $\partial^s_2$. At the limit, we obtain equality.

 The same method does not work for finite slope forms, as the finite slope projector is very different for Hida's ordinary projector. To construct $p$-adic $L$-functions for families of finite slope forms one should then be able to estimate the overconvergent norm of these derivatives and show that they satisfy certain distribution relations. The interested reader is referred to \cite{LiuNHF} where the $p$-adic avatars of Maa\ss{}--Shimura operators are studied.

\section{$p$-adic $L$-functions}\label{padicLfunctions}
We now construct two $p$-adic $L$-functions using the above Eisenstein measures: the two-variable one and the improved. We fix a tame level $N$ and two characters: $\chi_1$ modulo $N_1$ for $N_1 \mid N$, and $\chi' $ modulo $R$, with $(R,Np)=1$. Let $R_0$ be the product of all the primes dividing $R$, $S=R_0p$ and $\chi=\chi'\chi_1$. Consider an irreducible component $\mathbb{F}$ of the ordinary Hecke algebra $\T_{NR_0}$; we suppose that all the classical specializations of $\mathbb{F}$ have level $\G_0(p)$ at $p$. 

We make the following hypothesis 
\begin{center}
\textbf{p-adic Mult. one} The generalized Hecke eigenspace on $\m{M}\otimes_{\Lambda}\mr{Frac}(\Lambda)$ associated with $\mathbb{F}$ is one-dimensional.
\end{center}
This hypothesis is not really necessary but simplify the definition and the evaluation of the $p$-adic $L$-function. The reader interested in removing ths hypothesis is referred to \cite[\S 7]{LiuSFL}. 

Let us decompose
\begin{align*}
\T^{\mr{ord}}\otimes_{\Lambda}\mr{Frac}(\Lambda) = \mathbb{I}_{\F}\oplus\mathbb{B}
\end{align*}
and let $1_{\F}$ be the corresponding idempotent. Let us denote by $\boldF$ a Siegel form in  $1_{\F}\mathcal{M}^{\mr{ord}}\otimes_{\Lambda}\mr{Frac}(\Lambda)$.

We define a twisted trace operator, following ideas of Perrin-Riou and Hida \cite[1.VI]{H1bis}. Let $N $ and $ L$ be two integers, we define the operator $[L/N]$ 
to be 
$$
\begin{array}{ccccc}
[N/L]& : & M^{(g)}_k(\G_0(L),\phi) & \longrightarrow & M^{(g)}_k(\tiny{\left( \begin{array}{cc}
                           1_g & 0 \\
                           0 & L/N 1_g
                          \end{array}
\right)} \G_0(L)\tiny{\left( \begin{array}{cc}
                           1_g & 0 \\
                           0 & N/L 1_g
                          \end{array}
\right)} ,\phi)  \\
& & f & \mapsto &  f \vert_k 
\left( \begin{array}{cc}
                           1_g & 0 \\
                           0 & N/L 1_g
                          \end{array}
\right) 
\end{array}
$$

Let now $N \mid L$ and define the twisted trace 
\begin{align*}
T_{L/N}:= \mr{Tr}_{L/N} \circ [N/L],
\end{align*}
for $\mr{Tr}_{L/N}$ the trace from $M^{(g)}_k(\tiny{\left( \begin{array}{cc}
                           1_g & 0 \\
                           0 & L/N 1_g
                          \end{array}
\right)} \G_0(L)\tiny{\left( \begin{array}{cc}
                           1_g & 0 \\
                           0 & N/L 1_g
                          \end{array}
\right)} ,\phi)$ to $M^{(g)}_k(\G_0(N),\phi)$. If $L$ is coprime with $p$ than this operator can be $p$-adically interpolated over the weights space. Let $R_0$ be the product of all prime factors of $R$. 
We hence define $L_p(\kappa,\kappa')$ to be such that 
\begin{align*}
1_{\F}\otimes 1_{\F}(\mr{Id}{\otimes}T_{N^2R^2/NR_0}\h(\kappa,\kappa'))=L_p(\kappa,\kappa')\boldF \otimes \boldF.
\end{align*}
\begin{rem}
It is clear that this $p$-adic $L$-function depends from the choice of $\boldF$. We do not have a duality between modular form and Hecke algebra so there is no clear choice for such $\boldF$. In contrast, for $g=1$, there is a duality given by the first Fourier coefficient $a(1,f\vert T)$, as $a(1,T_nf)=a(n,f)$.
\end{rem}

Similarly we have a one-variable $p$-adic $L$-function $L_p^*(\kappa)$ defined as
\begin{align*}
1_{\F}\otimes 1_{\F}(\mr{Id}{\otimes}T_{N^2R^2/NS'}\h^*(\kappa))=L^*_p(\kappa)\boldF \otimes \boldF.
\end{align*}

For a form $f_x$ in the family $\boldF$ we denote by $\beta_i(f_x)$ the Satake parameters for $f_x$. We have $\prod_i \beta_i(f_x) = p^{\frac{g(g+1)}{2}-gk}U_{p}^2$. As $f_x$ is ordinary we know that it is of finite slope for the operators $U_{p,i}$. In particular, we have that $\beta_i p^{k-i}$ defines analytic functions $\mc O(\m F)[1/p]$. As this will not change the interpolation properties of the $p$-adic $L$-function, we  can and shall choose the indexing of the $\beta_i$'s such that $\beta_i p^{k-i}$ is a $p$-adic unit. This amounts to say that $f_x$ is ordinary for all the $U_{p,i}$. We shall denote by $\mathds{B}_i$ the corresponding unit in $\mc O(\m F)$.

We define \begin{align*}
E_1(f_x,\eps,t)= & \prod_{i=1}^g{(1-(\chi\eps\omega^{-t})(p)\beta_i^{-1}p^{-t})},\\
E(f_x,\eps,t)  = & E_1(f_x,\eps,t)\prod_{i=1}^g\frac{1}{(1-(\chi^{-1}\eps^{-1}\omega^{t})(p)\beta_ip^{t-1})}.
\end{align*}
We shall write $w:\m F^{\mr{rig}} \rightarrow \W:=\mr{Spf}(\Lambda)^{\mr{rig}} $. We shall say that a point $x \in \m F (\Q_p)$ is \'etale if $w$ is \'etale at $x$.
The main theorem of the section is the following:
\begin{theo}\label{BigTheo}
For points $(x,\kappa')$ of type $(k,t,\eps)$ such that $x$ is \'etale, $1 \leq t \leq k-g$ we have the following interpolation formula:
\begin{align*}
L_p(x,\eps[t])= & {2^{1-g-gs}\alpha_{N^2/N_1^2}(f_x)R^{(1-t)g+\frac{g(g+1)}{2}}N^{g(g+1-k)}N_1^{g(k-t)} }\chi(-1)^g{(-1)}^{\frac{gk}{2}+gs+tg}\\
 & \times \frac{\prod_{i=1}^g(s+g-i)!}{{(2 \pi i)}^{sg} \pi^{\frac{g(g+1)}{2}}}  \frac{p^{ng(1-t)}E(f_x,\eps,t)\Ll^{(N_1)}(1-t,\mathrm{St}(f),\chi^{-1}\eps^{-1}\omega^t) }{G(\chi'\eps\omega^{-t})^g {\left(p^{\frac{g(g+1)}{2}-gk}\alpha_p(f_x)^{2}\right)}^n{\lla  f_x\left\vert \left(  \begin{array}{cc}
0 & -1 \\
NS & 0                                                                                                                                                      \end{array}
\right)   \right., f_x \rra_{NS}}}. 
\end{align*} 
Moreover we have a function $L_p^*(x)$ such that the following equality of locally analytic functions around \'etale points $x \in \m F^{\mr{rig}}$ holds:
\begin{align}\label{Factorization}
L_p(x,[k-g])=E_1(f_x,{1},k-g)L_p^*(x).
\end{align}
\end{theo}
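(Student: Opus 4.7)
The plan is to unravel the defining equation $(\mr{Id}\otimes T_{N^2R^2/NR_0})\h(\kappa,\kappa')=L_p(\kappa,\kappa')\boldF\otimes\boldF$ at an arithmetic point $(x,\eps[t])$ of type $(k,t,\eps)$ and apply the integral representation of Proposition \ref{InteExpr}. The étaleness of $w$ at $x$ ensures that the projection $1_\F$ is well-defined after specialization and that $\boldF$ specializes unambiguously to (a scalar multiple of) $f_x$. By the interpolation property of the Eisenstein measure, the specialization $\h([k],\eps[t])$ equals $\frac{1}{(-1)^s c^s_{g,t+g}A}e_{g,g}^{\mr{ord}}\h'_{1,t,\chi\eps\omega^{-t}}(z,w)$; the twisted trace operator $T_{N^2R^2/NR_0}$ applied in the second variable lowers the level from $N^2R^2p^{2n}$ to $NR_0p^n$, and the pairing with $1_\F\otimes 1_\F$ extracts $\lla f_x\vert w_N, \h'\rra$. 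The first identity of Proposition \ref{InteExpr} then converts this into $\Ll^{(N_1)}(1-t,\mr{St}(f),\chi^{-1}\eps^{-1}\omega^t)\cdot f_x\vert U_{N^2/N_1^2}$, which produces the $L$-value and the Hecke eigenvalue $\alpha_{N^2/N_1^2}(f_x)$ appearing in the formula.

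The Euler factor $E(f_x,\eps,t)$ splits into two contributions. The factor $E_1(f_x,\eps,t)=\prod_i(1-(\chi\eps\omega^{-t})(p)\beta_i^{-1}p^{-t})$ arises from the twisting method of \cite[\S 7]{BS}: the Eisenstein series on the analytic side lives at level $p^n$ (with $n$ the conductor of $\eps$), and comparison with the level-$p$ object in $\h$ produces this polynomial in the $\beta_i^{-1}$. The remaining factor $\prod_i(1-(\chi^{-1}\eps^{-1}\omega^t)(p)\beta_ip^{t-1})^{-1}$ comes from inverting the normalised $U_p^{2n}$ on the ordinary part, yielding the denominator $(p^{g(g+1)/2-gk}\alpha_p(f_x)^2)^n$. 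Collecting the archimedean contribution $\Omega_{k,s}(\tfrac12-t)p_{1/2-t}(t+g)/d_{1/2-t}(t+g)$ from the holomorphic projection of the Maass–Shimura derivative, the Gauss sum $G(\chi'\eps\omega^{-t})^g$ from Theorem \ref{FourierEis}, and the sign $(-1)^{gk}$ from the Fricke involution yields the explicit formula in the theorem.

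For the factorisation, specialise to $t=k-g$ and $\eps=1$, so that $s=0$ (no differential operator is applied) and $\chi\eps\omega^{-t}=\chi\omega^{g-k}$ is trivial modulo $p$, putting us in the setting of the second identity of Proposition \ref{InteExpr}. That identity gives
\begin{align*}
\lla f_x\vert w_N, \h'\rra = \lla f_x\vert w_N, {\h'}^*\rra\cdot p^{-g(g+1)/2}\prod_{i=1}^g(1-\beta_i^{-1}(f_x)\chi^{-1}(p)p^{g-k}),
\end{align*}
and the product on the right is exactly $E_1(f_x,1,k-g)$. Since $L_p^*(x)$ is defined by the analogous procedure applied to $\h^*$, we deduce the identity $L_p(x,[k-g])=E_1(f_x,1,k-g)L_p^*(x)$ at every arithmetic point $x$ in the étale locus of $\m F^{\mr{rig}}$; as both sides are analytic functions and arithmetic points are Zariski-dense in this locus, the equality extends to a local identity of analytic functions around any étale $x$.

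The principal obstacle is the careful bookkeeping of constants: powers of $p$, $\pi$, and $2\pi i$; conversions between $\G$-factors in $\Omega_{k,s}$; the effect of $w_N$ on the Petersson product at the modified level; and the passage between $\alpha_p(f_x)$ (a $U_p$-eigenvalue) and the Satake parameters $\beta_i(f_x)$ via the relation $\prod_i\beta_i=p^{g(g+1)/2-gk}\alpha_p^2$. No new analytic input is required beyond Proposition \ref{InteExpr} and the measure-theoretic properties of $\h$ and $\h^*$ already established.
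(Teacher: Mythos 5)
Your proposal follows the same route as the paper's proof: unwind the defining identity $1_{\F}\otimes 1_{\F}(\mr{Id}\otimes T_{N^2R^2/NR_0}\h(\kappa,\kappa'))=L_p(\kappa,\kappa')\boldF\otimes\boldF$ at an arithmetic point, express the coefficient of $f_x\otimes f_x$ as a ratio of Petersson products using self-duality of the Hecke operators for the Fricke-twisted pairing, feed in Proposition \ref{InteExpr}, and obtain the factorization \eqref{Factorization} from the second identity of that proposition (the twisted/untwisted comparison) together with density of arithmetic points. That is exactly the paper's strategy, and your treatment of the factorization is essentially the paper's.

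There are, however, two places where the proposal stops short of a proof. First, the statement being proved \emph{is} an explicit constant, and the one step you defer to ``careful bookkeeping'' is where essentially all of the paper's written proof lives: one must (i) track the factor $\alpha_p(f_x)^{-2n}p^{n(gk-g(g+1))}$ arising from commuting the double-coset description of $U_p^{2n}$ past the Fricke involutions and changing levels from $NR_0p$ to $N^2R^2p^{2n}$ (the paper does this via Lemma \ref{lemmaprojinfty}, \cite[Lemma 4.1]{BS} and the explicit matrix identities for $\tau_N$), and (ii) evaluate the archimedean ratio $\Omega_{k,s}(\tfrac12-t)\,p_{\frac12-t}(t+g)\big/\bigl(B_{2g}(t)(-1)^{gs}c^s_{g,t+g}d_{\frac12-t}(t+g)\bigr)$ by means of the identities $\G_g(s)\G_g(s+\tfrac12)=\pi^{\frac{g(g-1)}{2}+\frac g2}2^{\frac{g(g+1)}{2}-2gs}\prod_{i=1}^g\G(2s-i+1)$ and $\G_{2g}(g+\tfrac12)/(\G_g(g+\tfrac12)\G_g(\tfrac{g+1}{2}))=\pi^{g^2/2}$, which is what produces $\prod_{i=1}^g(s+g-i)!$ and the stated powers of $2$, $\pi$ and $(-1)$. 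Without carrying this out one has only the shape of the formula, not the formula. Second, your attribution of the denominator $\prod_{i=1}^g(1-(\chi^{-1}\eps^{-1}\omega^{t})(p)\beta_ip^{t-1})^{-1}$ of $E(f_x,\eps,t)$ to ``inverting the normalised $U_p^{2n}$'' double-counts: that inversion yields precisely the factor $\bigl(p^{\frac{g(g+1)}{2}-gk}\alpha_p(f_x)^{2}\bigr)^{-n}=(\prod_i\beta_i)^{-n}$, which appears as a separate term in the displayed formula; the denominator of $E$ is the inverse Euler factor at $p$ of the standard $L$-function and reflects the normalization of $\Ll^{(N_1)}$ relative to the $L$-value actually produced by the unfolding in Proposition \ref{InteExpr}. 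Neither issue changes the strategy, but both must be resolved to land on the theorem as stated.
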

\begin{rem}
Let us denote by $\rho_{f,\mr{Sta}}$ the standard Galois representation constructed in \cite{ScholzeTor} and by $\mathcal{D}_{\mr{St}}(\rho_{f,\mr{Sta}})$ the semi-stable $(\varphi,N)$-module associated with it.  The factor 
\begin{align*}
\frac{p^{ng(1-t)}}{G(\eps\omega^{-t})^g {\left(p^{\frac{g(g+1)}{2}-gk}\alpha_p(f_x)^{2}\right)}^n}
\end{align*}
is the $\epsilon$-factor of the Weil--Deligne representation associated with the $(\varphi,N)$-submodule spanned by the eigenvectors of eigevanlues $\beta_i(f_x)$, as predicted in \cite[\S 6]{CoaMot}. Also the factor $E(f_x,\eps,t)$ is the one predicted by Coates. Note also that their product does not depend on the monodromy $N$.
\end{rem}
\begin{rem}
The key to obtain the factorization above is that the factor $E_1$ which brings the trivial zero for forms $\G_0(p)$-Steinberg is an analytic function of $x$: 
\begin{align*}
E_1(f_x,{1},k-g)=\prod_{i=1}^g{(1-\mathds{B}_i^{-1}(x)p^{g-i})}.
\end{align*}
\end{rem}
\begin{proof}
Let $f_x$ be the evaluation of $\boldF$ at $x$. We need to calculate the coefficient at $f_x \otimes f_x$ of $\frac{1}{{(-1)}^{gs} c^{s}_{t+1} A }  e_{g,g}^{\mr{ord}}   \h'_{1,t,\chi\eps\omega^{-t}}(z,w)$. 
We begin from the coefficient of $\mr{Id}{\otimes}T_{N^2R^2/NR_0} e_{g,g}^{\mr{ord}}  \h'_{1,t,\chi\eps\omega^{-t}}$ which is
\begin{align*}
\frac{\lla  f_x \left\vert \left(  \begin{array}{cc}
0 & -1 \\
NR_0p & 0                                                                                                                                                      \end{array}
\right)   \right. ,  \lla f_x \left\vert \left(  \begin{array}{cc}
0 & -1 \\
NR_0p & 0                                                                                                                                                      \end{array}
\right)   \right., (T_{N^2R^2/NR_0} \otimes \mr{Id}) e_{g,g}^{\mr{ord}}   \h'_{1,t,\chi\eps\omega^{-t}} \rra_{NR_0p} \rra_{NR_0p}}{\lla  f_x\left\vert \left(  \begin{array}{cc}
0 & -1 \\
NR_0p & 0                                                                                                                                                      \end{array}
\right)   \right., f_x \rra_{NR_0p}^2},
\end{align*}
as the Hecke operators are self-dual for the normalized Petersson product $\lla  f_x\left\vert \left(  \begin{array}{cc}
0 & -1 \\
N & 0                                                                                                                                                      \end{array}
\right)   \right., f_x \rra$.
The proof of \cite[Proposition 5.3]{PilHida} tells us that the $U_p$ operator on the right-hand-side  can be written as $p^{m\frac{gk-g(g+1)}{2}}\left[\G_0(Np^{m})\left( \begin{array}{cc}
                           1_g & 0 \\
                           0 & p^{m} 1_g
                          \end{array}
\right)\G_0(Np)\right]$. We know the relation 
\begin{align*}
\left\vert_k \left(
\begin{array}{cc}
0 & -1 \\
Np & 0                                                                                                                                                      \end{array}
\right)   \right.
\left\vert_k \left(
\begin{array}{cc}
p^m & 0 \\
0 & 1                                                                                                                                                     \end{array}
\right) \right.   = & \left\vert_k \left(
\begin{array}{cc}
0 & -1 \\
Np^{m+1} & 0                                                                                                                                                      \end{array}
\right)   \right. ,\\
\left\vert_k \left(
\begin{array}{cc}
0 & -1 \\
N & 0                                                                                                                                                      \end{array}
\right)   \right.
\left\vert_k \left(
\begin{array}{cc}
1 & 0 \\
0 & N/L                                                                                                                                                     \end{array}
\right) \right.   = &  \left\vert_k \frac{N}{L}\left(
\begin{array}{cc}
0 & -1 \\
L & 0                                                                                                                                                      \end{array}
\right)   \right..
\end{align*}
We use Lemma \ref{lemmaprojinfty} and  \cite[Lemma 4.1]{BS} to see that the numerator is 
\begin{align*}
\alpha_p(f_x)^{-2n} p^{n(gk-g(g+1) )} \lla  f_x \left\vert\tau_{NS}  \right. ,  \lla f_x \left\vert \tau_{N^2R^2p^{2n+1}}  \right.,  \h'_{1,t,\chi\eps\omega^{-t}} \rra_{N^2R^2p^{2n}} \rra_{NR_0p^{2n}},
\end{align*}
where we recall $S=R_0p$. 

From Proposition \ref{InteExpr} we have a term $\frac{\Omega_{k,s}({\frac{1}{2}-t}) p_{\frac{1}{2}-t}(t+g)}{ d_{\frac{1}{2}-t}(t+g)}$ appearing; we are left to evaluate
\begin{align*}
&\frac{\Omega_{k,s}({\frac{1}{2}-t}) p_{\frac{1}{2}-t}(t+g)}{B_{2g}(t){(-1)}^{gs}c^{s}_{g,t+g} d_{\frac{1}{2}-t}(t+g)} \\
= & \frac{\Omega_{k,s}({\frac{1}{2}-t})}{B_{2g}(t){(-1)}^{gs}c^{s}_{g,g+\frac{1}{2}} } \\
= &\frac{{(-1)}^{g(g+t+s)}\G_{2g}\left( g + \frac{1}{2} \right)}{  {2^{g(1+ 2t)}} \pi^{g+2g^2}}
\prod_{j=1}^s\frac{\G_g(s+\frac{g}{2}-\frac{j}{2})}{\G_g(s+\frac{g}{2}+1-\frac{j}{2})}\\
&\times {(-1)}^{\frac{gk}{2}} 2^{1+\frac{g(g+1)}{2}-g+2gt} \pi^{\frac{g(g+1)}{2}} \frac{\G_g(k-t+\frac{1-g}{2})\G_g(k-t-\frac{g}{2})}{\G_g(k-s+\frac{1}{2}-t)\G_g(k-s+\frac{1-g}{2}-t)}\\
= &{(-1)}^{\frac{3gk}{2}}2^{1+\frac{g(g+1)}{2}-2g}\pi^{-\frac{g(3g+1)}{2}} \frac{\G_{2g}(g+\frac{1}{2})}{\G_g(g+\frac{1}{2})\G_g(\frac{g+1}{2})}{\G_g(s+\frac{g-s}{2}+\frac{1}{2})\G_g(s+\frac{g-s}{2})}
\end{align*}
and to conclude we use that: 
\begin{align*}
\G_g(s)\G_g\left(s+\frac{1}{2}\right)& = \pi^{\frac{g(g-1)}{2}}2^{\frac{g(g+1)}{2}-2gs}\pi^{\frac{g}{2}}\prod_{i=1}^{g}\G(2s-i+1),\\
  \frac{\G_{2g}(g+\frac{1}{2})}{\G_g(g+\frac{1}{2})\G_g(\frac{g+1}{2})} & = \pi^{\frac{g^2}{2}}.
\end{align*}

The proof of second part of the theorem is very similar.  Comparing \cite[(7.13), (7.13)']{BS} we see that the only difference with the previous calculation is that we have to remove the factor 
\begin{align*}
p^{-\frac{g(g+1)}{2}} \prod_{i=1}^{g}(1-\beta_i^{-1}(f_x)\chi^{-1}(p)p^{-t}).
\end{align*} 
Here the power of $p$ compensate the missing power of $p$ in the term $A^*$ which appears in the interpolation formula (\ref{EisMeasure*}).
\end{proof}
\section{A formula for the derivative}
We now fix a form $f$ of weight $g+1$ and we suppose that $f$ has a Satake parameter, let us say  $\beta_g$, equal to $p^{-1}$. Conjecturally, this should imply that $\pi_f$ has not spherical level at $p$ (as otherwise the $\beta_i$'s should all be Weil number of weight zero). In particular, if $\varphi=(\rho,N)$ is the $L$-parameter (with values in $\mr{GSpin}_{2g+1}$) of $\pi_{f,p}$ then $N$ should have a $1$ in the $g,g+1$ entry. 
\begin{defin}
We say that $f$ is $\G_0(p)$-Steinberg at $p$ if $\beta_g=p^{-1}$ and the $g,g+1$ entry of $N$  is not zero. 
\end{defin}
This condition should conjecturally ensure us that the trivial zero is brought by $E_1$ and not by the missing factor $(1-p^{-s})$ of the completed $L$-function for $\mr{St}(f)$.
 We remark that $\G_0(p)$-Steinberg points are {\it isolated} in $\mr{Spec}(\m T)$ in the sense that only finitely many forms in $\mr{Spec}(\m T)$ satisfy this condition (as $p^{k-g}\beta_g$ must have fixed $p$-adic valuation). 
We know recall the main theorem of the paper:
\begin{theo}
Let $f$ be a Siegel form of weight $g+1$ and trivial Nebentypus; suppose that $f$ is $\G_0(p)$-Steinberg and the corresponding point on $\m F^{\mr{rig}}$ is \'etale, then 
\begin{align*}
\frac{\textup{d} }{\textup{d}s}{L_p(\mr{St}(f),s)}_{\vert_{s=0}}=\ell^{\mathrm{al}}(\mathrm{St}(f))E^*(f) \frac{\Ll^{(Np)}(\mr{St}(f),0)}{\Omega(f)},
\end{align*}
for $\Omega(f)$ a suitable complex period from \ref{BigTheo} and 
\begin{align*}
E^*(f)= & \frac{\prod_{i=2}^g{(1-\beta_i^{-1}p^{-1})}}{\prod_{i=1}^g(1-\beta_ip)}.
\end{align*}
\end{theo}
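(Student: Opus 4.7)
The plan is to carry out the Greenberg--Stevens strategy, using the factorization (\ref{Factorization}) established in Theorem \ref{BigTheo} as the starting point. At the point $x_0 \in \m F^{\mr{rig}}$ corresponding to $f$, the \'etaleness hypothesis lets us use $X := k - (g+1)$ as a local coordinate on $\m F^{\mr{rig}}$ near $x_0$; we write $T$ for the local cyclotomic coordinate around $[1]$, so that $(X,T)=(0,0)$ is the point of interest. The locus $\kappa' = [k(x)-g]$ on which (\ref{Factorization}) holds is then the diagonal $T = X$.

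First I would differentiate the identity $L_p(X, X) = E_1(X) L_p^*(X)$ at $X = 0$; since $E_1(0) = 0$ (the trivial zero), the chain rule yields
\begin{align*}
\partial_X L_p(0,0) + \partial_T L_p(0,0) = E_1'(0)\, L_p^*(0).
\end{align*}
Here $L_p^*(0)$ equals, up to the non-vanishing Euler factors and the period, $\Ll^{(Np)}(\mr{St}(f),0)/\Omega(f)$, and $E_1'(0)$ is the derivative of $\prod_{i=1}^g (1-\mathds{B}_i^{-1}(X) p^{g-i})$ at $X = 0$, which reduces to a non-zero constant multiple of $\mu := \tfrac{d \log \mathds{B}_g}{dX}(x_0)$, the log-derivative of the Hecke eigenvalue producing the trivial zero.

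The second equation, needed to isolate $\partial_T L_p(0,0)$, comes from analyzing the restriction of $L_p$ to $T = 0$. At integer $X \geq 0$, the interpolation formula of Theorem \ref{BigTheo} expresses $L_p(X,0)$ as an explicit multiple of $E(f_x, 1, 1)\, \Ll^{(N_1)}(0, \mr{St}(f_x))$. Only the factor $(1-\mathds{B}_g^{-1}(X)p^X)$ of $E_1(f_x,1,1)=\prod_{i=1}^g (1-\mathds{B}_i^{-1}(X)p^{X+g-i})$ vanishes at $X = 0$, and its derivative there equals $\mu - \log p$. Combining this with the non-vanishing factors gives $\partial_X L_p(0,0)$ as an explicit multiple of $L_p^*(0)$ depending only on $\mu$, $\log p$ and the non-vanishing Euler factors. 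Substituting into the first relation determines $\partial_T L_p(0,0)$, and hence $\partial_s L_p(f,s)\vert_{s=0} = -\partial_T L_p(0,0)$ (the sign coming from the substitution $s = k-g-t$ at fixed $k$). The final step is to identify the resulting combination of $\mu$ and $\log p$ with the Greenberg--Benois $\ell$-invariant $\ell^{\mr{al}}(\mr{St}(f))$ from \cite{RosLinv}.

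The main obstacle I anticipate is the rigorous computation of $\partial_X L_p(0,0)$ from the interpolation formula. The latter contains transcendental factors (powers of $2\pi i$, Gamma values, Petersson norms, Atkin--Lehner constants) whose $p$-adic interpolation in $X$ must be set up carefully so that the $p$-adic derivative is meaningful, and one must check that the vanishing algebraic factor $(1-\mathds{B}_g^{-1}(X)p^X)$ is the only source of the first-order term. Once this is controlled, matching the final expression with $\ell^{\mr{al}}(\mr{St}(f)) E^*(f)$ -- including reconciling the precise shape of the denominator $\prod_{i=1}^g(1-\beta_i p)$ of $E^*(f)$ with the Euler factors arising in the interpolation -- amounts to a careful bookkeeping using the $\ell$-invariant formula from \cite{RosLinv}.
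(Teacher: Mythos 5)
Your setup of the Greenberg--Stevens method is correct, and your first equation (differentiating $L_p(X,X)=E_1(X)L_p^*(X)$ along the factorization locus $T=X$) is exactly the step the paper takes. The gap is in your second equation. The crucial input, which you have missed, is that $L_p$ vanishes \emph{identically} along $T=0$ (i.e.\ $t=1$, i.e.\ $s=k-g-1$), so that $\partial_X L_p(0,0)=0$ and the first equation alone already yields $\partial_T L_p(0,0)=E_1'(0)L_p^*(0)$. The reason for this identical vanishing is not the factor $(1-\mathds{B}_g^{-1}(X)p^{X})$, which indeed vanishes only at $X=0$: for $w(x)$ large the form $f_x$ is classical of level prime to $p$ (by the control theorem of \cite{PilHida}), its Euler factor at $p$ has full degree $2g+1$, and the quantity interpolated at $t=1$ rewrites as $E_1(f_x,t)(1-p^{t-1})\prod_i(1-\beta_i^{-1}p^{t-1})\Ll(\mathrm{St}(f_x),1-t)$; the factor $(1-p^{t-1})$ is zero at $t=1$ for every such $x$, and these points are Zariski dense. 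Your assertion that ``only the factor $(1-\mathds{B}_g^{-1}(X)p^X)$ vanishes at $X=0$'' is therefore false, and what you deduce from it fails in two ways.

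First, quantitatively: your second equation would give $\partial_X L_p(0,0)$ equal to a nonzero multiple of $\mu-\log p$ rather than $0$, so after substitution your value of $\partial_s L_p$ would carry a spurious $\log p$ contribution that is absent from the statement (the $\ell$-invariant computed in \cite{RosLinv} is $-\mathrm{d}\mathds{B}_g/\mathrm{d}k$, with no $\log p$ term). Second, methodologically: the interpolation formula holds only at the discrete set of arithmetic points and involves quantities ($\Ll(\mathrm{St}(f_x),0)$, Petersson norms, powers of $2\pi i$) that are not $p$-adically interpolated in $X$, so one cannot obtain $\partial_X L_p(X,0)$ by differentiating that formula factor by factor -- the obstacle you flag at the end is not a bookkeeping issue but a sign that this route is not available. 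The point of identifying the line of identical vanishing is precisely to avoid ever differentiating the interpolation formula; once you add that observation, your argument collapses onto the paper's proof.
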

\begin{proof}
By hypothesis we have that $f$ corresponds to a point $x$ which is \'etale over $\W=\mr{Spf}(\Lambda)^{\mr{rig}}$. Let us write $t_0 = (z \mapsto \omega^{-g-1}(z)z^{g+1} )$; $t_0$ is a local uniformizer in $\mathcal{O}_{\W,[g+1]}$ and, by \'etalness, $t_0$ is also a local uniformizer for $\mathcal{O}_{\m F^{\mr {rig}},x}$. We have an isomorphism between the tangent spaces and this induces an isomorphism on derivations:
\begin{align*}\
\mathrm{Der}_{K}(\mathcal{O}_{\W,[g+1]},\C_p) \cong \mathrm{Der}_{K}(\mathcal{O}_{\m F^{\mr {rig}},x},\C_p).
\end{align*}
The isomorphism is made explict by fixing a common basis $\frac{\partial }{\partial T}$.
We consider the two variable $p$-adic $L$-function of Theorem \ref{BigTheo} with $\chi=1$ and $N_1=1$. We shall now write $L_p(k,t)$ for $L_p(x,t)$ (here $k=w(x)$).
 
If $w(x)$ is big enough, we know that $f_x$ must be a classical form of level prime to $p$ \cite[Th\'eor\`eme 1.1 (6)]{PilHida}. Then we know that the Euler factor at $p$ of $L(\mathrm{St}(f),s)$ is \begin{align*}
(1-\phi(p)p^{-s})\prod_{i=1}^g(1-\phi(p)\beta_{p,i}^{- 1}p^{-s})(1-\phi(p)\beta_{p,i}p^{-s}).
\end{align*}
Hence we can rewrite \begin{align*}
E(f_x,t) \Ll^{(p)}(\mathrm{St}(f),1-t) = & E_1(f_x,t)(1-p^{t-1})\prod_{i=1}^g{(1-\beta^{-1}_i p^{t-1})}\Ll(\mathrm{St}(f),1-t). 
\end{align*}
 The means that the two variables $p$-adic $L$-function vanishes on $t=1$. In what follows we shall use $s$ as a variable rather than $t$; remember that $t = k-g-s$.\\
The following formula is a straightforward consequence of the vanishing along $s=k-g-1$:
\begin{align*}
\frac{\textup{d} }{\textup{d}s}{L_p(k,s)}_{\vert_{s=0,k=g+1}} =  - \frac{\textup{d} }{\textup{d}k}{L_p(k,s)}_{\vert_{s=0,k=g+1}}.
\end{align*}
From the factorization in \ref{BigTheo} we obtain 
\begin{align*}
\frac{\textup{d} }{\textup{d}k}{L_p(k,k-g)}_{\vert_{k=g+1}}= \frac{\textup{d} \mathds{B}_g(k)}{\textup{d}k}_{\vert_{k=g+1}}\prod_{i=1}^{g-1}{(1-\mathds{B}_i^{-1}(g+1)p^{g-i})} L_p^*(g+1).
\end{align*}
To conclude we use \cite[Theorem 1.3]{RosLinv}:
\begin{align*}
 \ell^{\mathrm{al}}(\mathrm{St}(f)) = & -\frac{\textup{d}\mathds{B}_g(k)}{\textup{d}k}_{\vert_{k=g+1}}.
\end{align*}

\end{proof}
\subsection{Some examples}\label{examples}
In this last section we want to present some examples of the automorphic representation of $\mr{GSp}_4(\Q_p)$ which could be associated with a Siegel modular form of level $\G_0(p)$ and check whether our theorem applies or not.\\

We now consider an automorphic representation $\pi$ which at $p$ is isomorphic to the one labelled IIIa in \cite[Table A.1]{RSLocal}.  We see that it admits exactly two vectors invariant for $\G_0(N)$  (\cite[Table A.15]{RSLocal}, where $\G_0(p)$ is called $\mr{Si}(\pfrak)$) and the corresponding Satake parameters are respectively $ \chi(p),p^{-1}$ and $\chi^{-1}(p),p^{-1}$; only one corresponds to a $U_p$-ordinary Siegel form. The monodromy of the corresponding Weil--Deligne representation is given by the following matrix \cite[Table A.7, \S A.7]{RSLocal}:
\begin{align*}
\left(
\begin{array}{ccccc}
0 &   &    &   &  \\
  & 0 & -1 &   &  \\
  &   &  0 & 1 &  \\
  &   &    & 0 &  \\
  &   &    &   & 0                                                                                                                                                      \end{array}
\right).
\end{align*}
From \cite[Table A.10]{RSLocal} we see that the factor $E_1$ vanishes and the main theorem applies. Note that the corresponding automorphic representation belongs to the family of twists of the Steinberg representation for $\mr{GL}_2$ which has non zero Plancherel measure; we can than use \cite[Theorem 5.7]{ShinPlan} to deduce that there exists a weight $3$ Siegel form with this local representation.\\

We now consider an automorphic representation $\pi$ which at $p$ is isomorphic to the one labelled IIa in \cite[Table A.1]{RSLocal}; we see that it admits exactly one vector  invariant for $\G_0(N)$. The monodromy of the corresponding Weil--Deligne representation is given by 
\begin{align*}
\left(
\begin{array}{ccccc}
0 & 1 &   &   &  \\
  & 0 &   &   &  \\
  &   & 0 &   &  \\
  &   &   & 0 & -1 \\
  &   &   &   & 0                                                                                                                                                      \end{array}
\right).
\end{align*}
We see that the trivial zero here comes from the factor $E_2$ and we can not deal with it.\\

In a third case, the one labelled IVb, the monodromy of the corresponding Weil--Deligne representation is given by the same matrix as IIIa; hence our theorem would apply  if a form $f$  with this local representation exists (the Plancherel measure of this representation is $0$ so the previous theorem does not apply).

\bibliographystyle{alpha}
\bibliography{Bibliografy}
\end{document}